\documentclass[12pt]{amsart}

\usepackage[english]{babel}
\usepackage[cp1250]{inputenc}
\usepackage{graphicx}
\usepackage{url}
\usepackage{multicol}
\usepackage{amsthm}
\usepackage{amsmath}
\usepackage{amssymb}
\usepackage{geometry}
\usepackage{pinlabel}
\usepackage{hyperref}
\usepackage{xcolor}
\usepackage[all]{xy}
\usepackage{mathtools}

\newtheorem{Theorem}{Theorem}
\newtheorem{proposition}[Theorem]{Proposition}
\newtheorem{corollary}[Theorem]{Corollary}
\newtheorem{remark}[Theorem]{Remark}
\newtheorem{example}[Theorem]{Example}

\theoremstyle{definition}
\newtheorem{definition}{Definition}

\newtheorem{lemma}{Lemma}

\newcommand{\RR}{\mathbb {R}}

\newcommand*\xbar[1]{%
   \hbox{%
     \vbox{%
       \hrule height 0.5pt 
       \kern0.5ex
       \hbox{%
         \kern-0.1em
         \ensuremath{#1}%
         \kern-0.1em
       }%
     }%
   }%
}

\def\aut{{\mathrm{Aut}}}

\def\aff#1{\mathrm{Aff}#1}

\def\aff#1{\mathrm{Aff}#1}

\def\dis{\mathrm{Dis}}
\def\inn{{\mathrm{Inn}}}

\def\Alex{\mathrm{Alex}}

\def\setof#1#2{\{#1\, : \,#2\}}

\def\Z{\mathbb Z}

\def \up {\, \overline{\ast }\, }

\begin{document}
\title{Knot quandle decomposition along a torus}

\author{Marco Bonatto, Alessia Cattabriga, Eva Horvat}

\address[Bonatto]{Dipartimento di Matematica e Informatica \\ UNIFE}
\email[Bonatto]{marco.bonatto.87@gmail.com}

\address[Cattabriga]{Department of Mathematics \\University of Bologna\\Piazza di Porta San Donato 5\\ 40126 Bologna \\Italy}
\email{alessia.cattabriga@unibo.it}

\address[Horvat]{University of Ljubljana\\
Faculty of Education\\
Kardeljeva plo\v s\v cad 16\\
1000 Ljubljana, Slovenia}
\email[Horvat]{eva.horvat@pef.uni-lj.si}

\keywords{augmented fundamental quandle, links in the solid torus, satellite knots, links in lens spaces, Alexander quandle, quandle colorings}
\subjclass[2020]{57K10, 57K12, 57K14}
\date{\today}

\begin{abstract}
We study the structure of the augmented fundamental quandle of a knot whose complement contains an incompressible torus. We obtain the relationship between the fundamental quandle of a satellite knot and the fundamental quandles/groups of its companion and pattern knots. General presentations of the fundamental quandles of a link in a solid torus, a link in a lens space and a satellite knot are described. In the last part of the paper, an algebraic approach to the study of affine quandles is presented and some known results about the Alexander module and quandle colorings are obtained. 
\end{abstract}
\maketitle

\begin{section}{Introduction}\label{sec1}
Forty years ago, quandles were introduced independently by David Joyce  \cite{JO} and Sergei Matveev \cite{Ma} as an algebraic structure that generalizes the knot group. Since then, a considerable amount of research has aimed at using the algebra of quandles in knot theory, while on the other hand, quandles and related structures have been intensively studied as abstract algebraic objects. In this setting, it would be interesting to explore how the machinery  developed in the algebraic setting could be used to obtain and recover topological results.\\

 In \cite{CH}, we explored the  fundamental quandle as a functor from the category of oriented tangles to a suitably defined quandle category. This implied results about construction of the fundamental quandle of a link from the fundamental quandles of its constituent tangles. \\

In this paper, we are looking at a different kind of  knot quandle decomposition. For any knot $K$ in a closed orientable connected 3-manifold $M$, the action of the fundamental group on the knot quandle $Q(K)$ implies that $Q(K)$ has a natural structure of an augmented quandle \cite{FR}. If $M$ is not a simply connected manifold, the augmentation map $\epsilon \colon Q(K)\to \pi _{1}(K)$ is not surjective. In such case, the augmented fundamental quandle of $K$ is given by a general presentation, as defined in \cite{FR}. Our study is concerned with the case when the complement of a knot $K$ contains an incompressible torus $T$. Then the fact that $K$ is notrivially embedded in a solid torus with boundary $T$ is reflected in the structure of its fundamental quandle. Specifically, the fundamental quandle of $K$ may be expressed as an augmented quandle, whose underlying set is the quandle of a knot in the solid torus. In the classical case of knots in $S^{3}$, this happens when $K$ is a satellite knot. In Theorem \ref{th2}, the fundamental quandle of a satellite knot is expressed in terms of the fundamental quandle of its pattern and the fundamental groups of the pattern and companion knots.  Moreover, when $M$ is a lens space, then the complement of every non-local knot $K$ in $M$ contains an incompressible torus, and a presentation for  the augmented fundamental quandle of $K$ may be expressed in a simple way. Since lens spaces may be thought of as ``building blocks'' for more complicated 3-manifolds, a similar technique could be applied to obtain a presentation for the fundamental quandle of a knot inside any 3-manifold $M$.  \\

Despite finding a presentation for the knot quandle is not difficult, the general algebraic structure of knot quandles remains quite elusive. As in group theory, abelian groups are well understood due to their $\mathbb Z$-module structure,  affine quandles represent a better known and probably most studied class of quandles, since they are essentially modules over the ring of Laurent polynomials. Among them, the most important from a topological point of view is the classical Alexander module of a knot.  In the paper, we associate to each connected quandle an affine quandle that is the quandle analogue of group abelianization and that in the case of knot quandles equals the Alexander module. Using this approach and the presentation of the quandle of a satellite knot, we recover a classical result of \cite{LM} about the structure of the module and some results on colorings by affine quandles (see \cite{BAE}).\\

The paper is structured as follows. In Section \ref{sec2}, we recall some background on quandles themselves and on the fundamental quandle of links. Subsection \ref{sub21} contains the basic definitions of a quandle, an augmented quandle and related terms, while Subsection \ref{sub22} recalls quandle presentations. In Subsection \ref{sub23} we explain the topological meaning of the fundamental quandle of a link and describe its presentation. In Section \ref{sec3}, we obtain a general presentation of the augmented fundamental quandle of a link in the solid torus. This result is applied in Section \ref{sec4} to present the quandle of a link in a lens space. In Section \ref{sec5}, we focus on satellite knots. First we recall the relationship between the fundamental group of a satellite knot and the groups of its companion and pattern knots. Then we express the fundamental quandle of a satellite knot in terms of the fundamental quandles/groups of its companion and pattern knots. We obtain a general presentation of the fundamental quandle of a satellite knot, which is illustrated in a couple of sample computations. In Section \ref{sec6}, we turn our attention to affine quandles. A quick recollection of basic definitions and properties is done in Subsection \ref{sub61}. In Subsection \ref{sub62}, we describe the construction of the Alexander module of a connected quandle, and its relationship with the Alexander module of a knot.  Subsection \ref{sub63} contains results about colorability of knots by affine quandles.

\end{section}

\section{Preliminaries}\label{sec2}

\subsection{Quandles as algebraic structures}\label{sub21}
A \textit{quandle} is a nonempty set $Q$ with a binary operation $\ast \colon Q\times Q\to Q$ that satisfies the following axioms:
\begin{enumerate}
\item $x\ast x=x$ for every $x\in Q$.
\item The mapping $R_{y}\colon Q\to Q$, called \textit{right translation} given by $R_{y}(x)=x\ast y$, is a bijection for every $y\in Q$. 
\item For any $x,y,z\in Q$ the formula $(x\ast y)\ast z=(x\ast z)\ast (y\ast z)$ holds.  
\end{enumerate} A set $Q$ with a binary operation that satisfies only the axioms (2) and (3) is called a \textit{rack}.

Using  axiom (2), it is possible to see that a quandle $(Q,*)$ admits another binary operation $\up $, given by $x\up y:=R_{y}^{-1}(x)$ for every $x,y\in Q$. The equalities $(x*y)\up y=(x\up y)*y=x$ hold for every $x,y\in Q$. Every group $G$ has a natural quandle structure, given by conjugation, that is $g*h=hg h^{-1}$, that will be denoted as $\mathrm{Conj}(G)$.

A map between quandles $f\colon (Q_1,*_1)\to (Q_2, *_2)$ is called a \textit{quandle homomorphism} if it satisfies $f(x\ast _{1}y)=f(x)\ast _{2}f(y)$ for every $x,y\in Q_{1}$. Note that in any quandle $Q$, the right translation $R_{y}(x)=x*y$ is a quandle automorphism. 
 An equivalence relation $\sim $ on a quandle $(Q,*)$ is called a \textit{congruence} if the implication $$x\sim y\textrm{ and }z\sim w \Rightarrow x*z\sim y*w\textrm{ and }x\up z\sim y\up w$$ holds for every $x,y,z,w\in Q$. Every quandle homomorphism $f:Q\to Q'$ defines a congruence on $Q$ by $x\sim y \iff f(x)=f(y)$.

The subgroup of the automorphism  group\footnote{For function composition we take the convention  $fg(x)=f(g(x))$.}, $\aut(Q)$, generated by all right translations is called the \textit{inner automorphism group} $\inn(Q)$, and the subgroup 
$$\dis(Q)=\langle R_x R_y^{-1}\, : \, x,y\in Q\rangle$$ of $\inn(Q)$ is called the {\it displacement group of $Q$}. It is a well-known fact that the groups $\inn(Q)$ and $\dis(Q)$ have the same orbits: if they act transitively on $Q$, the quandle is said to be {\it connected}. Many quandle theoretical properties are determined by the group theoretical properties of the displacement group. If $h:Q\longrightarrow Q'$ is a surjective quandle homomorphism, then the function 
$$R_x \mapsto R_{h(x)} $$
defined on generators of $\inn(Q)$ can be extended  to a surjective group homomorphism $\inn(Q)\longrightarrow \inn(Q')$, and such homomorphism restricts and corestricts to the displacement groups.\\ 

In this paper we are interested in quandles associated to topological structures: in this setting, the notion of augmented quandle arises naturally. We recall it from \cite{JO}. 

\begin{definition}\label{def2} An \textit{augmented quandle} is given by a pair $(Q,G)$, where $G$ is a group, equipped with a right action on a set $Q$ (which we will write exponentially), and there is a map $\epsilon \colon Q\to G$ (the \textit{augmentation map}) that satisfies \begin{enumerate}
\item $q^{\epsilon (q)}=q$, $\forall q\in Q$,
\item $\epsilon (q^x)=x^{-1}\epsilon (q)x$, $\forall q\in Q$, $\forall x\in G$. 
\end{enumerate}
The augmented quandle $(Q,G)$ is the set $Q$ with a binary operation $*$ that is given by $x*y=x^{\epsilon (y)}$ for any $x,y\in Q$. 
\end{definition}

It is easy to see that an augmented quandle is, indeed, a quandle. Moreover, every quandle $Q$ comes equipped with a natural augmentation map $\epsilon \colon Q\to \inn(Q)$, defined by $\epsilon (q)=R_{q}$.

\begin{definition}\label{def6} A \textit{homomorphism} of augmented quandles $(Q,G)$ and $(P,H)$ consists of a group homomorphism $g\colon G\to H$ and a function $f\colon Q\to P$, such that the diagram 
\begin{displaymath}
\xymatrix{
Q\times G \ar@{->}[d]^{f\times g} \ar@{->}[r] & Q \ar@{->}[r]^{\epsilon} \ar@{->}[d]^{f} & G \ar@{->}[d]^{g}\\
P\times H \ar@{->}[r] & P \ar@{->}[r]^{\epsilon} & H}
\end{displaymath}
commutes.
\end{definition}

\subsection{Quandle presentations}\label{sub22}
We recall the basics about quandle presentations from \cite{FR}.

\begin{definition}\label{def3} Denote by $F(S)$ the free group, generated by a set $S$. Let $\mathfrak{ip}$ be the smallest equivalence relation on the product $S\times F(S)$, such that $(a,w)\mathfrak{ip}(a,aw)$ for every $(a,w)\in S\times F(S)$. The \textit{free quandle} on $S$ is the augmented quandle $FQ(S)=((S\times F(S))/_{\mathfrak{ip}},F(S))$, where the action of $F(S)$ on $(S\times F(S))/_{\mathfrak{ip}}$ is on the second entry, and the augmentation map $\epsilon \colon (S\times F(S))/_{\mathfrak{ip}}\to F(S)$ is given by $\epsilon ([a,w])=w^{-1}aw$. It follows that the quandle operation is given by $[a,w]*[b,z]=[a,wz^{-1}bz]$ for any $a,b\in S$ and $w,z\in F(S)$.    
\end{definition}
The free quandle generated by the set $S$ has the universal property that for any quandle $Q$, a given function $S\to Q$ extends uniquely to a quandle homomorphism $FQ(S)\to Q$. \\

A \textit{primary presentation} of a quandle consists of two sets $S$ (the generating set) and $R$ (the set of relations). Elements of $R$ are ordered pairs $(x,y)$ with $x,y\in FQ(S)$, which we will write as equations: $x=y$. The presentation defines a quandle $[S\, \colon R]$ as follows. Let $\sim $ be the smallest congruence on the free quandle $FQ(S)$ containing $R$ (such that $x\sim y$ whenever $(x=y)\in R$). Then $$[S\, \colon R]:=\frac{FQ(S)}{\sim }\;.$$

The notation $[S:R]$ will be reserved for  quandle presentations, while for group presentations we will use the more standard notation $\langle S:R\rangle$.

In order to deal with fundamental quandles of knots in non simply connected manifolds, we will need the notion of a general quandle presentation.  

\begin{definition} Given two sets $S$ and $T$, the \textit{extended free quandle} $FQ(S,T)$ is the augmented quandle $FQ(S,T)=((S\times F(S\cup T))/_{\mathfrak{ip}},F(S\cup T))$, where $F(S\cup T)$ acts by the canonical right action on the second entry of $(S\times F(S\cup T))/_{\mathfrak{ip}}$, and whose augmentation map is given by $\epsilon [a,w]=w^{-1}aw$. The quandle operation is thus given by $[a,w]*[b,z]=[a,wz^{-1}bz]$ for any $a,b\in S$ and $w,z\in F(S\cup T)$. 
\end{definition}
 
A \textit{general presentation} of a quandle consists of four sets: the primary generating set $S_{P}$, the operator generators $S_{O}$, the primary relations $R_{P}$ and the operator relations $R_{O}$. Elements of $R_{P}$ are statements of the form $x=y$, where $x,y\in FQ(S_{P},S_{O})$, and elements of $R_{O}$ are words $w\in F(S_{P}\cup S_{O})$.  Given a general presentation $[S_{P},S_{O}\colon R_{P},R_{O}]$, let $\sim $ be the smallest congruence on $FQ(S_{P},S_{O})$ that contains
\begin{enumerate}
\item $x\sim y$ for every $(x=y)\in R_{P}$,
\item $[z,w]\sim [z,1]$ for every $z\in S_{P}$ and $w\in R_{O}$.
\end{enumerate}

Then the quandle, generated by the presentation, is defined as 
\begin{equation}\label{general presentation}
 [S_{P},S_{O}\colon R_{P},R_{O}]:=\frac{FQ(S_{P},S_{O})}{\sim }\;.
\end{equation}

\begin{remark}\label{remark on generators}
Let $Q$ be the quandle, presented as in \eqref{general presentation}.
Note that the elements of $Q$ are of the form
\begin{equation*}\label{elements for general presentations}
[(x,g)]=[(x,1)]\cdot g
\end{equation*}
for $x\in S_P$ and $g\in F(S_P\cup S_O)$.
\end{remark}

\subsection{The fundamental quandle of a link}\label{sub23}

The fundamental quandle of a codimension 2 link was defined in \cite{FR}. For the reader's convenience, we include a brief review of its construction. 

Let $M$ be a closed connected oriented manifold, and let $L\subset M$ be a nonempty submanifold of codimension 2. We assume that the embedding is proper if either manifold contains boundary, and that $L$ is transversely oriented in $M$. Denote by $N_{L}$ a regular neighbourhood of $L$ inside $M$, and by $E_{L}=\mathrm{closure}(M-N_{L})$ its exterior. Choose a basepoint $z\in E_{L}$ and define 
$$\mathcal P_{L}=\{\textrm{homotopy classes of paths in $E_{L}$ from $\partial N_{L}$ to $z$}\}\;.$$ 
The homotopy has to fix $z$ and may move the starting point along $\partial N_L$. The fundamental group $\pi _{1}(E_{L})=\pi _{1}(E_{L}, z)$ has a natural right action on the set $\mathcal P_{L}$. If $a\in \mathcal P_{L}$ is represented by a path $\alpha $ and $g\in \pi _{1}(E_{L})$ is represented by a loop $\gamma $, then $a^{g}$ is defined to be the homotopy class of the composite path $\alpha \gamma $. 

For any point $x\in \partial N_{L}$, denote by $m_{x}$ the loop based at $x$, which runs once around the meridian of $L$ in the positive direction. Now define a map $\epsilon \colon \mathcal P_{L}\to \pi _{1}(E_{L})$ by $$\epsilon ([\alpha ])=[\overline{\alpha }m_{\alpha (0)}\alpha ]\;,$$ where $[\zeta ]$ denotes the homotopy class, and $\overline{\zeta }$ denotes the reversal of a path $\zeta $. It is easy to see that the pair $(\mathcal P _{L},\pi _{1}(E_{L}))$ defines an augmented quandle with augmentation map $\epsilon $ (see Definition \ref{def2}). 

\begin{definition} \label{def5} The augmented quandle $(\mathcal P_{L},\pi _{1}(E_{L}))$ defined above is called \textit{the fundamental quandle} of the link $L$, and is denoted by $Q(L)$. 
\end{definition}

For the sake of simplicity, we denote an element $[\alpha]$  of $Q(L)$ just as $\alpha$. The correspondence $(M,L)\to Q(L)$ is functorial, that is   a map  $f:(M,L)\to (M',L')$  induces an extended quandle homomorphism at the augmented quandle level, that we still denote with $f$.

The fundamental quandle of a link in $S^{3}$ admits a simple presentation in terms of a link diagram. Let $D_{L}$ be an oriented link diagram of a link $L$ in $S^{3}$. Figure \ref{fig:crossingQ} depicts the \textit{crossing relation} at a crossing of the diagram $D_{L}$.

\begin{figure}[h]
\labellist
\normalsize \hair 2pt
\pinlabel $x$ at 10 150
\pinlabel $y$ at 180 150
\pinlabel $x*y$ at 200 40
\pinlabel $y$ at 610 150
\pinlabel $x\up y$ at 580 40
\pinlabel $x$ at 780 150
\endlabellist
\begin{center}
\includegraphics[scale=0.30]{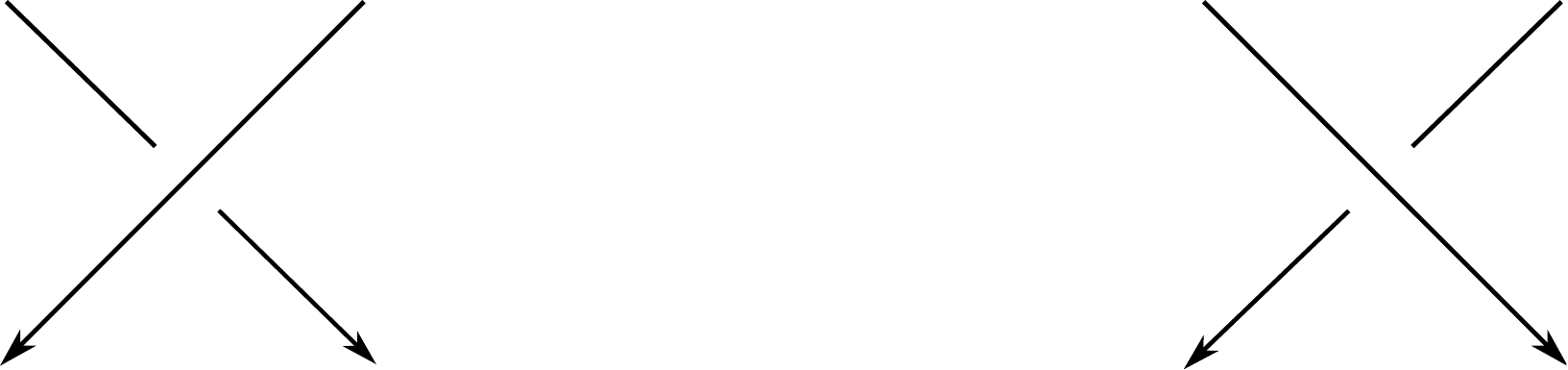}
\caption{The crossing relation at a positive crossing (left) and at a negative crossing (right)}
\label{fig:crossingQ}
\end{center}
\end{figure}

\begin{Theorem}\cite[Theorem 4.7]{FR}\label{th0} Given a diagram $D_{L}$ of a link $L$ in $S^{3}$, let $A(D_L)$ be the set of arcs and let $C(D_L)$ be the set of crossing relations at the crossings of $D_L$. Then the fundamental quandle $Q(L)$ of the link $L$ is given by the quandle presentation $[A(D_L)\colon C(D_L)]$. 
\end{Theorem}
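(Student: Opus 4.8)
The plan is to build an explicit quandle isomorphism between $[A(D_L):C(D_L)]$ and $Q(L)$, mirroring the classical derivation of the Wirtinger presentation of $\pi_1(E_L)$ but carried out at the level of paths rather than loops. First I would place the basepoint $z$ high above the plane of the diagram and, for every arc $a\in A(D_L)$, single out the element $x_a\in\mathcal P_L$ given by the homotopy class of the straight path from a point of $\partial N_L$ lying directly beneath $a$ up to $z$. A direct inspection shows $\epsilon(x_a)=m_a$, the positively oriented meridian of $a$; these meridians are exactly the Wirtinger generators of $\pi_1(E_L)$. By the universal property of the free quandle recalled after Definition \ref{def3}, the assignment $a\mapsto x_a$ extends to a unique quandle homomorphism $\Phi\colon FQ(A(D_L))\to Q(L)$.

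Next I would check that the generators $x_a$ satisfy the crossing relations of $C(D_L)$. At a crossing with overpassing arc $y$ whose underpass splits into an incoming arc $x$ and an outgoing arc $x*y$, sliding the noose of the incoming under-arc beneath the over-strand drags its starting point once through the meridian disk of $y$; reading off the effect on homotopy classes yields $x_{x*y}=x_x^{\,\epsilon(x_y)}=x_x*x_y$ in $Q(L)$, which is precisely the quandle operation of Definition \ref{def2}. Hence every relation of $C(D_L)$ holds in the image of $\Phi$, and so $\Phi$ descends to a quandle homomorphism $\bar\Phi\colon[A(D_L):C(D_L)]\to Q(L)$.

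Surjectivity of $\bar\Phi$ I would obtain by a ``reading-off'' argument. An arbitrary class in $\mathcal P_L$ has a representative that, after a homotopy, lies in general position above the diagram and descends beneath a definite finite sequence of arcs before reaching $z$. Recording this sequence of underpasses (with signs) expresses the class as one of the nooses $x_a$ acted on by a word in the meridians $m_b=\epsilon(x_b)$; since the $\pi_1(E_L)$-action on $\mathcal P_L$ is the one coming from the augmentation, this word is realized inside $[A(D_L):C(D_L)]$, and the class lies in the image of $\bar\Phi$.

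The crux is injectivity, which I would establish by promoting the reading-off procedure to a well-defined inverse $\Psi\colon Q(L)\to[A(D_L):C(D_L)]$, sending a class $[\alpha]$ to the element read from the underpass sequence of a general-position representative. By construction $\Psi\circ\bar\Phi$ and $\bar\Phi\circ\Psi$ are the identities, \emph{provided} $\Psi$ is well defined. The hard part will be precisely this well-definedness: one must show that any two general-position representatives of the same homotopy class yield the same element of $[A(D_L):C(D_L)]$, i.e.\ that every elementary homotopy of paths in $E_L$ rel $\partial N_L$ alters the recorded word only through consequences of the crossing relations $C(D_L)$. This is where the genuine topological input is needed: a handle (or cell) decomposition of $E_L$ adapted to $D_L$ in which the arcs contribute the generating nooses and the crossings contribute exactly the relations $C(D_L)$, together with a van Kampen argument for the fundamental groupoid of $E_L$ rel $\partial N_L$ certifying that there are no further relations. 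Once well-definedness is secured, $\bar\Phi$ is a bijective quandle homomorphism, giving the desired isomorphism $Q(L)\cong[A(D_L):C(D_L)]$.
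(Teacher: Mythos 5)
The paper offers no proof of this statement at all---it is quoted verbatim from Fenn and Rourke \cite[Theorem 4.7]{FR}---so there is nothing internal to compare your argument against; what I can assess is whether your outline would actually close the theorem. Your overall strategy (nooses over arcs as generators, verification of the crossing relations via the augmentation, a reading-off map as inverse) is the standard and correct one, and it is essentially the route taken in \cite{FR}. The first two steps are genuinely complete as you describe them: the universal property of $FQ(A(D_L))$ gives $\Phi$, the meridian-disk computation gives $x_{x*y}=x_x^{\epsilon(x_y)}$, and the general-position underpass-recording argument gives surjectivity of $\bar\Phi$.

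The gap is that the injectivity step---which you correctly identify as the crux---is only named, not carried out, and it is the entire mathematical content of the theorem. Everything up to that point shows only that $[A(D_L):C(D_L)]$ surjects onto $Q(L)$; the assertion that $C(D_L)$ is a \emph{complete} set of relations is exactly the claim that your $\Psi$ is well defined, and saying ``one must show that every elementary homotopy alters the recorded word only through consequences of $C(D_L)$'' restates that claim rather than proving it. To close it you would need to actually enumerate the elementary moves of a general-position homotopy of a path rel $\partial N_L$ (a finger pushed under an arc and retracted; the path swept across a crossing point; the initial point slid along $\partial N_L$ through an undercrossing) and check, case by case, that the first is trivial, the second follows from the crossing relation applied through the augmentation $\epsilon$, and the third is the crossing relation itself---or, equivalently, set up the adapted cell decomposition of $E_L$ you allude to and run the van Kampen argument for the fundamental groupoid. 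Note also that the last of these moves is the one that genuinely requires the \emph{quandle} (not just rack) setting to interact correctly with sliding the basepoint of the noose along a single arc; your sketch does not address why no relations beyond $C(D_L)$ arise there. As written, the proposal is a correct plan with its decisive step outsourced.
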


Now consider a more general situation. Let $L$ be a link in a closed connected orientable 3-manifold $M$. By the Lickorish--Wallace theorem, $M$ may be obtained by rational surgery on a framed link in $S^{3}$ (see \cite[\S 16]{PS}). The link $L$ may thus be given by a diagram $D_{L}$, which contains also the surgery curves for $M$, each  decorated with the framing parameter belonging to $\mathbb Q\cup\{1/0\}$. In this diagram, the surgery curves will be colored by red, and the curves belonging to $L$ will be colored by black. Denote by $Ar(D_L)$ (respectively $Ab(D_L)$) the set of red arcs (respectively black arcs) of the diagram $D_{L}$. There are four types of crossings that may occur in the diagram $D_{L}$, and each of them determines a corresponding crossing relation as depicted in Figure \ref{fig:crossingS}. Denote by $C_{i}(D_L)$ the set of crossing relations at all crossings of type $C_{i}$ in the diagram $D_{L}$ for $i=1,2,3,4$. 

\begin{figure}[h]
\labellist
\normalsize \hair 2pt
\pinlabel $x$ at 10 170
\pinlabel $y$ at 10 65
\pinlabel $x*y$ at 200 65
\pinlabel $C_1$ at 95 250
\pinlabel $x$ at 335 170
\pinlabel $a$ at 335 65
\pinlabel $x^{a}$ at 520 65
\pinlabel $C_2$ at 425 250
\pinlabel $a$ at 670 170
\pinlabel $x$ at 670 65
\pinlabel $\overline{\epsilon (x)}a\epsilon (x)$ at 840 65
\pinlabel $C_3$ at 755 250
\pinlabel $a$ at 1000 170
\pinlabel $b$ at 1000 70
\pinlabel $\overline{b}ab$ at 1180 70
\pinlabel $C_4$ at 1090 250
\endlabellist
\begin{center}
\includegraphics[scale=0.30]{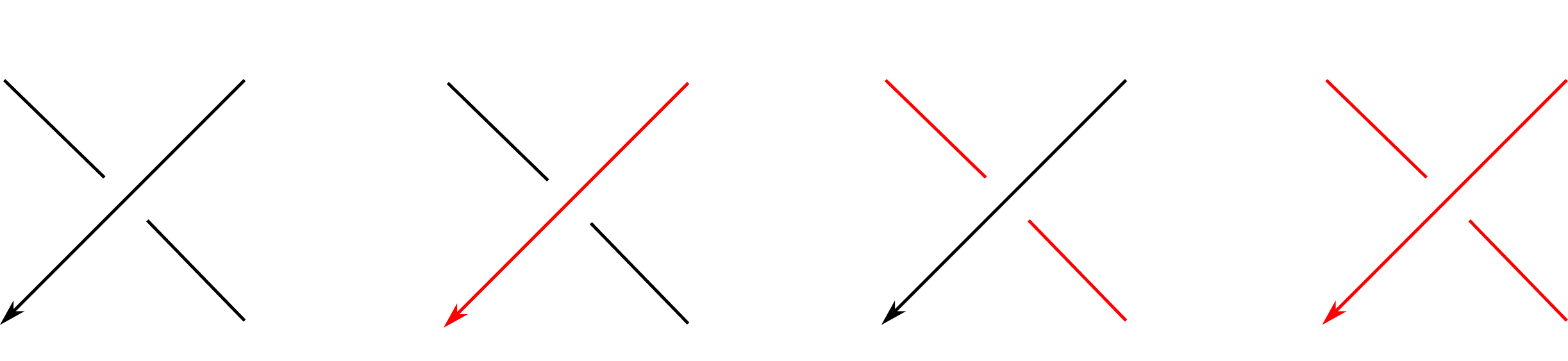}
\caption{The four types of crossing relations in a diagram of a link in a 3-manifold}
\label{fig:crossingS}
\end{center}
\end{figure}

The exterior $E_{L}=\mathrm{closure}(M-N_{L})$ of the link $L$ in the 3-manifold $M$ is obtained from $S^{3}$ by removing a regular neighborhood of each surgery curve, gluing back the solid torus according to the surgery prescription  (which means we attach a 2-handle according to the framing parameter  and  close up with the remaining 3-handle), and removing a regular neighborhood of each component of the link $L$. For each surgery curve, the attachment of the 2-handle adds a new relation to the fundamental group $\pi _{1}(E_{L})$. Denote by $R$ the set of relations in $\pi _{1}(E_{L})$ that correspond to attaching maps of the 2-handles. In \cite[Theorem 4.10]{FR}, a presentation of the fundamental rack of $L$ is described in terms of arcs and crossings of $D_L$ for the case of integer surgery (i.e. the framing parameters belong to $\mathbb Z\cup\{1/0\}$). That result easily generalizes to the following.     

\begin{Theorem} \cite[Theorem 4.10]{FR} \label{th1} The fundamental quandle $Q(L)$ of the link $L$ in a closed, orientable, connected 3-manifold $M$ is given by the general quandle presentation 
$$\left [Ab(D_L), Ar(D_L)\colon C_{1}(D_L)\cup C_{2}(D_L), C_{3}(D_L)\cup C_{4}(D_L)\cup R\right ]\;.$$
\end{Theorem}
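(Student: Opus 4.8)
The plan is to follow the scheme of \cite[Theorem 4.10]{FR}, isolating the single point at which integrality of the framing is used and replacing it by the rational surgery datum. Write $L'=L\cup c_1\cup\dots\cup c_k$ for the link in $S^3$ consisting of the black link $L$ together with the red surgery curves $c_i$, and let $X=\mathrm{closure}(S^3-N_{L'})$ be its exterior. By construction $E_L$ is obtained from $X$ by gluing back, for each red curve $c_i$, a solid torus along the surgery slope; as recalled in the text this amounts to attaching a $2$-handle along the slope curve $\gamma_i$ on $\partial N(c_i)$ and capping off with a $3$-handle. The computation therefore splits into a group part, $\pi_1(E_L)$, and a quandle part, the set $\mathcal P_L$ of based paths, and in each part I would track how the arcs and crossings of the decorated diagram $D_L$ encode generators and relations.

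For the group part I would apply van Kampen to the decomposition $E_L=X\cup(\text{$2$- and $3$-handles})$. Attaching a $2$-handle along $\gamma_i$ kills the class of $\gamma_i$, while the $3$-handles are attached along $2$-spheres and leave $\pi_1$ unchanged; hence
\[
\pi_1(E_L)\cong \pi_1(X)\big/\langle\!\langle \gamma_1,\dots,\gamma_k\rangle\!\rangle,
\]
where $\pi_1(X)$ carries the Wirtinger presentation read off from all (black and red) arcs of $D_L$. For rational framing $p_i/q_i$ the slope is $\gamma_i=\mu_i^{\,p_i}\lambda_i^{\,q_i}$; expressing the longitude $\lambda_i$ as the usual word in meridians coming from the Wirtinger presentation of $c_i$, each $\gamma_i$ becomes a single word in the operator generators, and the set of these words is precisely $R$. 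This is the \emph{only} place where the framing enters, and the only modification with respect to the integer case treated in \cite{FR}, where $\gamma_i=\mu_i^{\,p_i}\lambda_i$.

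For the quandle part I would start from the $S^3$-presentation of $Q(L')$ provided by Theorem \ref{th0}, in which every arc is a primary generator and every crossing yields a quandle relation $x*y$, and then \emph{demote} the red arcs from primary generators to operator generators, identifying each red arc with its meridian $\epsilon$-image in $\pi_1(E_L)$. Under this demotion the four crossing types transform exactly as in Figure \ref{fig:crossingS}: a crossing between two black arcs stays the primary relation $C_1$; a black arc passing under a red arc $a$ becomes the primary relation $x^a$ of type $C_2$; a red arc passing under a black arc $x$ becomes, by augmented-quandle axiom (2) of Definition \ref{def2}, the operator relation $\overline{\epsilon(x)}\,a\,\epsilon(x)$ of type $C_3$; and a red-under-red crossing becomes the operator relation $\overline{b}\,a\,b$ of type $C_4$. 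Adjoining the surgery words $R$ as operator relations then yields the claimed general presentation.

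The main obstacle is to make the demotion rigorous, i.e.\ to prove that the augmented-quandle homomorphism from the all-primary $S^3$ presentation to the claimed presentation induces an isomorphism onto $Q(L)=(\mathcal P_L,\pi_1(E_L))$. Concretely one must check that, after surgery, paths ending on the red tori drop out of $\mathcal P_L$ (those tori become interior to $E_L$) while the red meridians survive in $\pi_1(E_L)$, so that red arcs contribute to the operator part but not to the underlying set; this is where Remark \ref{remark on generators} and the functoriality of $Q$ are used to compare based paths in $X$ with based paths in $E_L$. By contrast the passage from integer to rational surgery is not an obstacle: it changes only the explicit word $\gamma_i$ recorded in $R$, leaving the generator/relation bookkeeping of \cite{FR} intact.
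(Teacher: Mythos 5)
Your proposal is correct and takes essentially the same route as the paper, which offers no proof of its own beyond citing \cite[Theorem 4.10]{FR} for integer surgery and remarking that the result ``easily generalizes'': your argument is precisely that generalization, with the Fenn--Rourke handle-decomposition and arc/crossing bookkeeping carried over verbatim and the framing entering only through the slope word $\gamma_i=\mu_i^{\,p_i}\lambda_i^{\,q_i}$ recorded in $R$. Your account of the four crossing types and of the demotion of red arcs to operator generators agrees with Figure \ref{fig:crossingS} and the surrounding discussion, so the only part left implicit --- making the demotion rigorous --- is exactly the part the paper also delegates to \cite{FR}.
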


\section{Fundamental quandle of a link in the solid torus}\label{sec3}
In this section, we describe the fundamental quandle of a link inside the simplest 3-manifold that is not the 3-sphere. This will be our building block for the fundamental quandles of satellite links and of links in lens spaces. \\

Let $L$ be a link in the solid torus $S^{1}\times D^{2}$. Imagine the solid torus standardly embedded in $\RR ^{3}$, so that its core coincides with the unit circle in $\RR ^{2}\times \{0\}$. Then a regular projection of $L$ to the plane $\RR ^{2}\times \{0\}$ with a hole in its center represents a diagram of $L$, see Figure \ref{fig1}. Using such diagrams, a knot atlas of knots in the solid torus was constructed in \cite{GA}. \\
\begin{figure}[h!]
\labellist
\normalsize \hair 2pt
\pinlabel $L$ at 510 310
\endlabellist
\begin{center}
\includegraphics[scale=0.2]{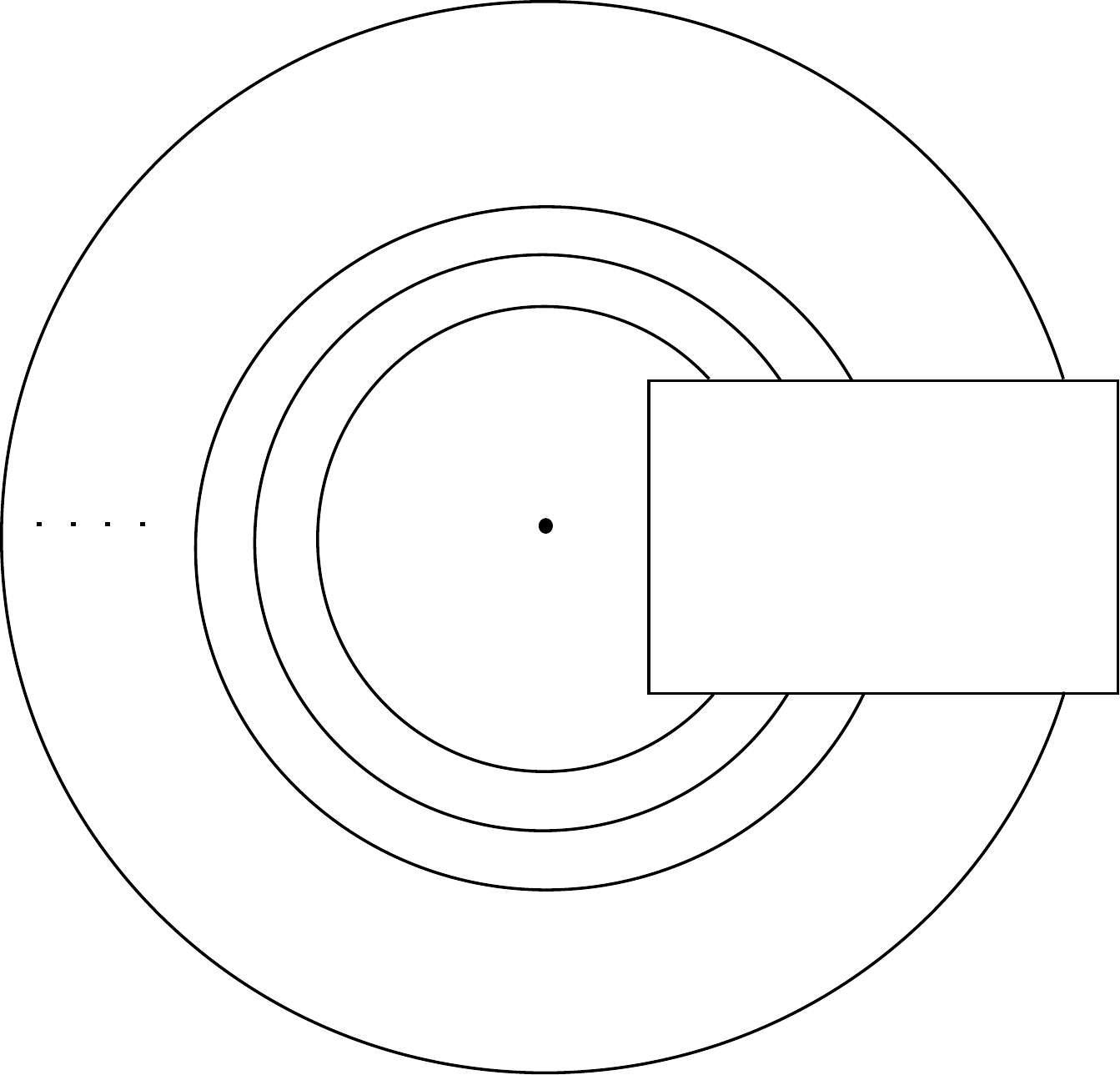}
\caption{A diagram of a link in the solid torus}
\label{fig1}
\end{center}
\end{figure}

Compactifying $\RR ^{3}$, the $z$-axis becomes a circle $C$. The solid torus $S^{1}\times D^{2}$ is obtained by cutting out a regular neighborhood of $C$ from $S^{3}$. We obtain a diagram of $L$ inside $S^{3}$, in which $C$ represents a surgery curve, see Figure \ref{fig2}.  A presentation of the fundamental quandle of $L$ may be read from this diagram. Denote by $a_{1},\ldots ,a_{d}$ the arcs of the surgery curve $C$ and by $x_{1},\ldots ,x_{n}$ the arcs of the link $L$, as depicted in Figure \ref{fig2}. Denote by $\delta _{i}\in \{-1,1\}$ the sign of the crossing with overcrossing arc $a_{1}$ and undercrossing arcs $x_{i}, x_{d+i}$. 
\begin{figure}[h!]
\labellist
\normalsize \hair 2pt
\pinlabel $L$ at 520 230
\pinlabel $x_{1}$ at 245 250
\pinlabel $x_{2}$ at 205 270
\pinlabel $x_{3}$ at 150 290
\pinlabel $x_{d}$ at 35 280
\pinlabel $x_{d+1}$ at 350 150
\pinlabel $x_{d+2}$ at 350 100
\pinlabel $x_{d+3}$ at 350 60
\pinlabel $x_{2d}$ at 350 -20
\pinlabel $a_1$ at 330 250
\pinlabel $a_2$ at 310 340
\pinlabel $a_3$ at 285 360
\pinlabel $a_{d}$ at 170 395
\pinlabel $C$ at 30 180
\endlabellist
\begin{center}
\includegraphics[scale=0.35]{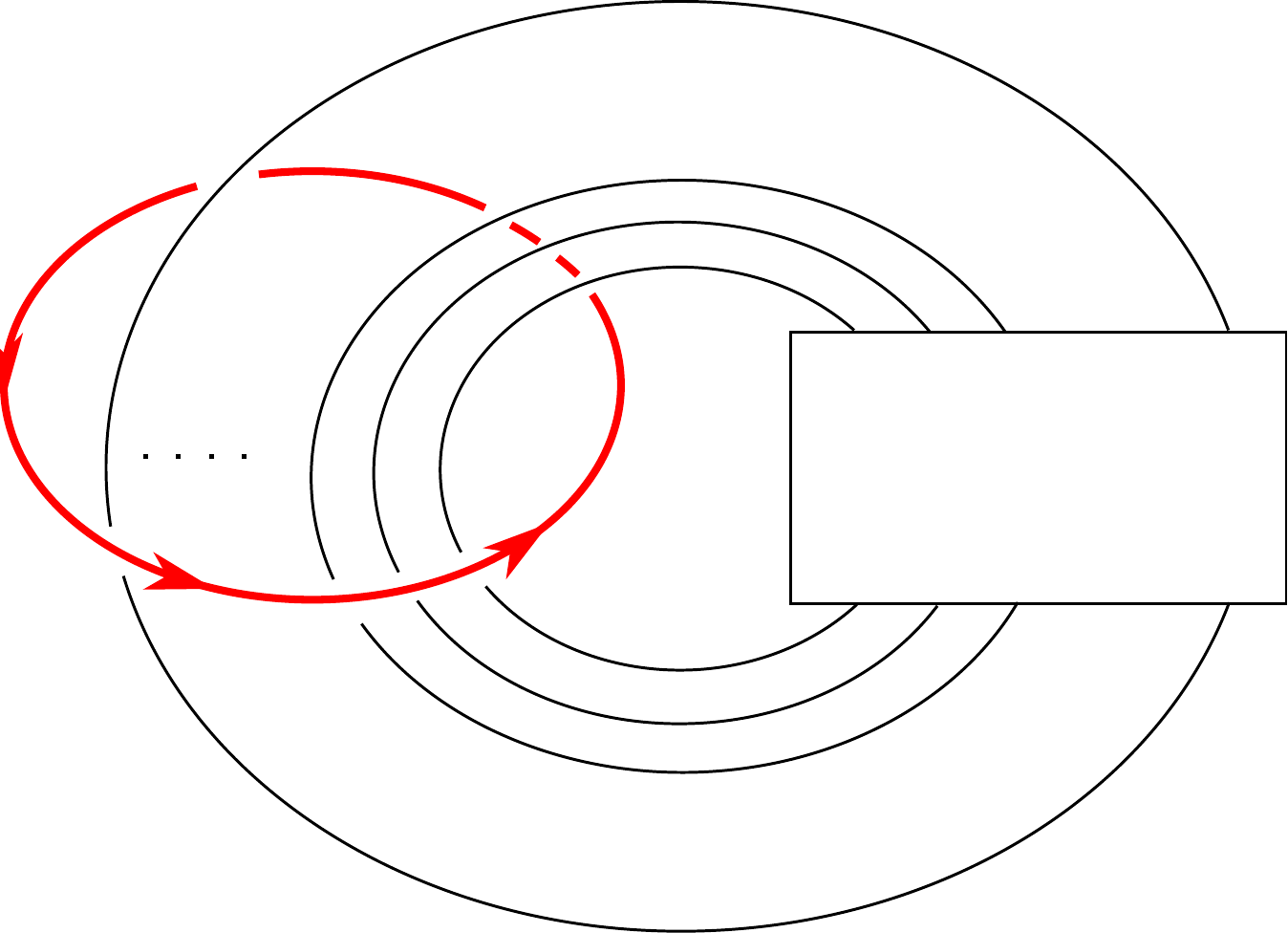}
\caption{Surgery diagram of a link $L$ in the solid torus}
\label{fig2}
\end{center}
\end{figure}
Note that our ambient manifold $S^{1}\times D^{2}$ is not closed: while the regular neighbourhood of $C$ is removed from $S^{3}$, no solid torus is glued back along the resulting boundary. The general presentation  of Theorem \ref{th1} could be adapted to the case of $M=S^{1}\times D^{2}$ just by removing relations $R$ that would correspond to the attachment of 2-handles.  So the general presentation of the fundamental quandle $Q(L)$ is given by 
\begin{xalignat*}{1}
& [ \{x_{1},\ldots ,x_n\},\{a_1,\ldots ,a_{d}\}\, \colon \, C_{1}(D_{L})\cup \{(x_{d+i})^{a_1}=x_i\textrm{ for }i=1,\ldots ,d\},\\
& \{\overline{\epsilon (x_{1})^{\delta _{1}}}a_{1}\epsilon (x_{1})^{\delta _{1}}=a_{2},\overline{\epsilon (x_{2})^{\delta _{2}}}a_{2}\epsilon (x_{2})^{\delta _{2}}=a_3,\ldots ,\overline{\epsilon (x_{d})^{\delta _{d}}}a_{d}\epsilon (x_{d})^{\delta _{d}}=a_{1)}\}]\;,
\end{xalignat*} which simplifies to a presentation with a single operator generator:
\begin{xalignat*}{1}
&\left  [\{x_{1},\ldots ,x_n\},\{a_1\}\, \colon \, C_{1}(D_{L})\cup \{(x_{d+i})^{a_1}=x_i\textrm{ for }i=1,\ldots ,d\},\{[a_1,\epsilon (x_{1})^{\delta _{1}}\ldots \epsilon (x_{d})^{\delta _{d}}]=1\}\right ] \;,
\end{xalignat*} 
 where $[a,b]$ denotes the commutator $aba^{-1}b^{-1}$.

\section{Fundamental quandle of a link in a lens space}\label{sec4}

The lens space $L_{p,q}$ is obtained from the 3-sphere by $(-p/q)$ surgery on the unknot. In other words, remove an unknotted solid torus $V$ from $S^{3}$. Denote by $\mu $ and $\lambda $ the meridian and preferred longitude of the remaining solid torus $U$, and by $\mu '$ the meridian of $V$. Now glue the removed part back by a homeomorphism $\phi \colon \partial V\to \partial U$, gluing $\mu '$ along a simple closed curve homologous to $p\lambda -q\mu$.  \\

\begin{figure}[h!]
\labellist
\normalsize \hair 2pt
\pinlabel $L$ at 520 230
\pinlabel $x_{1}$ at 245 250
\pinlabel $x_{2}$ at 205 270
\pinlabel $x_{3}$ at 150 290
\pinlabel $x_{d}$ at 35 280
\pinlabel $x_{d+1}$ at 350 150
\pinlabel $x_{d+2}$ at 350 100
\pinlabel $x_{d+3}$ at 350 60
\pinlabel $x_{2d}$ at 350 -20
\pinlabel $a_1$ at 330 250
\pinlabel $a_2$ at 310 340
\pinlabel $a_3$ at 285 360
\pinlabel $a_{d}$ at 170 395
\pinlabel $\textcolor{red}{-p/q}$ at -20 210
\endlabellist
\begin{center}
\includegraphics[scale=0.35]{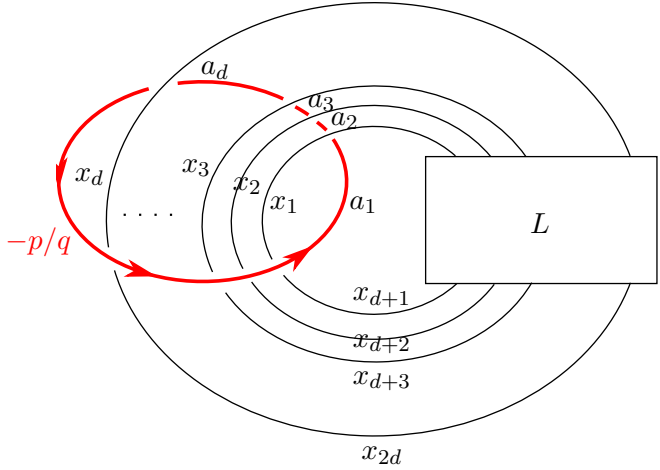}
\caption{Surgery diagram of a link $L$ in the lens space}
\label{fig3}
\end{center}
\end{figure}

Consider a link $L$ in the lens space $L_{p,q}$. In the above construction, it may be assumed that $L$ is contained inside the solid torus $U$. A surgery diagram of such link is given in Figure \ref{fig3}. The first step of the surgery (removal of the solid torus $V$) thus yields a link inside the solid torus, whose fundamental quandle was described in Section \ref{sec3} and is given by the general presentation 
\begin{xalignat*}{1}
&\left  [\{x_{1},\ldots ,x_n\},\{a_1\}\, \colon \,  C_{1}(D_{L})\cup \{(x_{d+i})^{a_1}=x_i\textrm{ for }i=1,\ldots ,d\},\{[a_1,\epsilon (x_{1})^{\delta _{1}}\ldots \epsilon (x_{d})^{\delta _{d}}]=1\}\right ] \;.
\end{xalignat*} 
In the second step of the surgery, the glueing of the 2-handle along the prescribed curve adds a new operator relation to this presentation (this relation belongs to the set $R$ in Theorem \ref{th1}). Using notation from Figure \ref{fig3}, the longitude of $U$ is given by $\lambda = a_{1}$, and the meridian of $U$ is given by $\mu =\epsilon (x_{1})^{\delta _{1}}\ldots \epsilon (x_{d})^{\delta _{d}}$. 

Therefore, the augmented fundamental quandle of the link $L$ in the lens space $L_{p,q}$ is given by the general presentation 
\begin{xalignat*}{1}
& [\{x_{1},\ldots ,x_n\},\{a_1\}\, \colon \,  C_{1}(D_{L})\cup \{(x_{d+i})^{a_1}=x_i\textrm{ for }i=1,\ldots ,d\},\\
& \left \{[a_1,\epsilon (x_{1}^{\delta _{1}})\ldots \epsilon (x_{d})^{\delta _{d}}]=1,\, a_{1}^{p}=\left (\epsilon (x_{1})^{\delta _{1}}\ldots \epsilon (x_{d})^{\delta _{d}}\right )^{q}\right \}] \;.
\end{xalignat*} 

In \cite{EH}, an analogous presentation of the fundamental rack of a link in $L_{p,1}$ was given. By counting representations of the fundamental rack of a link into a finite rack, we obtain the rack counting invariants. This idea was exploited to define rack invariants of links in $L_{p,1}$.

In  \cite{CN}, the authors associated to each link $L$ in  $L_{p,1}$ a virtual quandle invariant $VQ(L)$ that is a couple $(Q,f)$, with  $Q$ a quandle   and $f\in \aut(Q)$. The definition of $VQ(L)$ is given by describing a presentation constructed from a surgery diagram. Even if the authors find some connection with topological invariants of the link, such as homology,  the topological meaning of $VQ(L)$ is not clear: studying the connection between the above presentation of $Q(L)$  and the presentation of $VQ(L)$ could lead to a topological interpretation of $VQ(L)$.

\section{Fundamental quandle of satellite knots}\label{sec5}

A \textit{satellite knot} is a knot $S$, whose complement contains an incompressible torus that is not parallel to the boundary of a regular neighborhood of $S$. Let $S$ be a satellite knot, and denote by $T$ an incompressible, non boundary-parallel torus in the complement of $S$. Denote by $V$ the solid torus containing $S$, whose boundary is $T$. The core of $V$ is a knot $C$, that we call the \textit{companion} of $S$. Let $f\colon V\to S^{1}\times D^{2}$ be a homeomorphism that takes a preferred longitude and meridian of $V$ to the preferred longitude and meridian of the standard solid torus $U=S^{1}\times D^{2}$ in $S^{3}$. Then $P=f(S)$ is a knot in the solid torus that is called the \textit{pattern} of $S$. Moreover if we orient $C$ so that $S$ is homologous in $V$ to $wC$ with $w$ a non-negative integer number (there a choice to be made when $w=0$), we call  $w$  \textit{the winding number} of $S$.\\

Since $P$ can be considered both as a knot in $U$ and a knot in $S^3$, we will use the notation $\pi_1(P)=\pi_1(S^3-N_P)$  and $\pi_1(U-P)=\pi_1(U-N_P)$. Analogously, $Q(P)$ will denote the fundamental quandle of $P$ in $S^3$ and $Q(U-P)$ will denote the augmented fundamental quandle of $P$ in $U$. 

\begin{remark} The augmented fundamental quandle $Q(U-P)$ and the fundamental quandle $Q(P)$ of the pattern as a knot in $S^{3}$ have a straightforward connection. Namely, a primary presentation for $Q(P)$ is obtained from the general presentation of $Q(U-P)$, given at the end of Section 3, by sending the operator generator $a_1$ to the identity: $$\left[\{x_{1},\ldots ,x_{n}\}\colon C_{1}(D_L)\cup \{x_{d+i}=x_{i}\textrm{ for }i=1,\ldots ,d\}\right ]\;.$$ 
\end{remark}

It is well-known that the fundamental group of a satellite knot can be described in terms of the fundamental groups of its companion and pattern. 

\begin{proposition} \cite[Proposition 3.11]{BZ}  \label{prop1} Let S be a satellite knot with companion $C$ and pattern $P$. Let $f\colon V\to U$ be a homeomorphism between a regular neighborhood of $C$ and an unknotted solid torus $U$, such that $f(S)=P$. Denote by $i_{1}\colon \partial V\to S^{3}-C$ and $i_{2}\colon \partial U\to U-P$ the respective inclusions. Then  $\pi_1(S)\cong\pi_1(C)*_{(\pi_1(\partial V), i_1,i_2)}\pi_1(U-P)$.
\end{proposition}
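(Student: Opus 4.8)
The plan is to obtain the isomorphism as a direct application of the Seifert--van Kampen theorem, using the incompressible torus $T=\partial V$ to cut the exterior of $S$ into two pieces whose fundamental groups are the two factors and whose common boundary is $T$.

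First I would set up the decomposition. Since $S$ lies in the interior of the solid torus $V$, a regular neighbourhood $N_S$ of $S$ can be chosen inside $\mathrm{int}(V)$, so that $T=\partial V$ is disjoint from $N_S$ and thus lies in the exterior $E_S=\mathrm{closure}(S^3-N_S)$. As $T$ separates $S^3$ into $V$ and the complementary region $\mathrm{closure}(S^3-V)$, it separates $E_S$ into
$$A=\mathrm{closure}(S^3-V),\qquad B=\mathrm{closure}(V-N_S).$$
To meet the hypotheses of van Kampen I would enlarge $A$ and $B$ to open sets by adjoining a bicollar $T\times(-\varepsilon,\varepsilon)$ of $T$; then $A\cup B=E_S$, the intersection $A\cap B$ deformation retracts onto $T$, and all three sets are path connected.

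Next I would identify the three fundamental groups. Because $V$ is a regular neighbourhood of its core $C$, the piece $A=\mathrm{closure}(S^3-V)$ is the exterior of $C$, so $\pi_1(A)\cong\pi_1(C)$, and the inclusion of $A\cap B\simeq T=\partial V$ into $A$ induces the map $i_1$. The homeomorphism $f\colon V\to U$ with $f(S)=P$ restricts to a homeomorphism $B=\mathrm{closure}(V-N_S)\to\mathrm{closure}(U-N_P)$, whence $\pi_1(B)\cong\pi_1(U-P)$; under this identification the inclusion of $A\cap B\simeq\partial V$ into $B$ becomes $i_2\colon\partial U\to U-P$, since $f(\partial V)=\partial U$. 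Finally $A\cap B$ is a collar of the torus $T$, so $\pi_1(A\cap B)\cong\pi_1(\partial V)\cong\mathbb{Z}^2$. Applying the Seifert--van Kampen theorem then gives
$$\pi_1(S)=\pi_1(E_S)\cong\pi_1(A)*_{\pi_1(A\cap B)}\pi_1(B)\cong\pi_1(C)*_{(\pi_1(\partial V),\,i_1,\,i_2)}\pi_1(U-P),$$
which is the assertion.

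The steps requiring care are the bookkeeping ones rather than any deep topology: one must fix a basepoint on $T$, so that all three fundamental groups share it, and verify that the edge maps are exactly $i_1$ and $i_2$ after transporting the computation for $B$ through $f$. The incompressibility of $T$ is what gives the decomposition its force, since it ensures that $i_1$ and $i_2$ are injective on $\pi_1$, so that the two factors genuinely embed into the amalgamated product; the van Kampen pushout itself is produced regardless of injectivity. I expect the main obstacle to be purely the technical verification that the pieces can be opened up with path-connected intersection, which is precisely what the bicollar of $T$ arranges.
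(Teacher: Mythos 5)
Your argument is correct and is the standard one: the paper itself gives no proof of this proposition (it is quoted from Burde--Zieschang, Proposition 3.11), and the Seifert--van Kampen decomposition of $E_{S}$ along the torus $T=\partial V$ into the exterior of $C$ and $\mathrm{closure}(V-N_{S})\cong U-N_{P}$ is precisely the argument underlying that reference, consistent with the explicit description of the isomorphism $\Phi$ (with basepoint $z\in\partial V$) that the paper records immediately after the statement. Your closing remark is also correctly calibrated: incompressibility of $T$ is only needed for the factors to embed in the amalgamated product, not for the pushout identification itself.
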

More precisely, the isomorphism $\Phi: \pi_1(C)*_{(\pi_1(\partial V), i_1,i_2)}\pi_1(U-P)\to \pi_1(S)$ is defined as follows.
Choose a basepoint $z\in \partial V$ that will be omitted from the fundamental groups to simplify notation.
Define $\Phi \left(c_{1}p_{1}\ldots c_{r}p_{r}\right)= c_{1}f^{-1}(p_{1})\ldots c_{r}f^{-1}(p_{r}) $, where $c_{i}$ denotes a homotopy class of a loop in $S^{3}-C$ based at $z$, while $p_{i}$ denotes the homotopy class of a loop in $U-P$, based at $f(z)$.

 Since the fundamental quandle of any knot $K$ is a generalization of the fundamental group of $K$, it is plausible to expect that the fundamental quandle of a satellite knot may as well be described in terms of the fundamental quandles/fundamental groups of its companion and pattern knots. In what follows, we are going to specify this description. \\

Set $G_{P,C}= \pi_1(C)*_{(\pi_1(\partial V), i_1,i_2)}\pi_1(U-P)$ and denote by $\iota \colon \pi _{1}(U-P)\to G_{P,C}$ the inclusion into the amalgamated product. The fundamental group $\pi _{1}(U-P)$ admits a natural action on the fundamental quandle $Q(U-P)$ (see the definition of fundamental quandle in Section \ref{sub23}). Consider the set $Q(U-P)\times G_{P,C}$, on which we define a relation $\sim $ by $$(\alpha ,\iota (b)g)\sim (\alpha ^{b},g)\textrm{ for every }\alpha \in Q(U-P), b\in \pi _{1}(U-P)\textrm{ and }g\in G_{P,C}\;.$$  It is easy to check that $\sim $ is an equivalence relation on $Q(U-P)\times G_{P,C}$ and that the following diagram commutes 
\begin{displaymath}
\xymatrix{
G_{P,C} \ar@{->}[d]_{\Phi }  & \quad \\
\pi _{1}(S) & \pi _{1}(U-P) \ar@{->}[ul]_{\iota } \ar@{->}[l]^{(f^{-1})}}
\end{displaymath}

Using $\Phi $, the equivalence relation $\sim $ may also be expressed in another way.

\begin{lemma} \label{lemma0}For two elements $(\alpha ,g),(\beta ,h)\in Q(U-P)\times G_{P,C}$ we have $$(\alpha ,g)\sim (\beta ,h)\Leftrightarrow f^{-1}(\alpha )\Phi (gh^{-1})=f^{-1}(\beta )\;.$$
\end{lemma}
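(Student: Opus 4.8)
The plan is to first make the equivalence relation $\sim$ completely explicit, and then to read off both implications from the standard coset description of the fundamental quandle of a knot. The first step is to observe that $\sim$ is nothing but the orbit equivalence relation of a right action of $\pi_1(U-P)$ on $Q(U-P)\times G_{P,C}$. Indeed, setting $(\alpha,g)\cdot b=(\alpha^{b},\iota(b)^{-1}g)$ for $b\in\pi_1(U-P)$ defines a right action (this uses only that the $\pi_1(U-P)$-action on $Q(U-P)$ is a right action and that $\iota$ is a group homomorphism), and the generating relation $(\alpha,\iota(b)g)\sim(\alpha^{b},g)$ is exactly $x\sim x\cdot b$ with $x=(\alpha,\iota(b)g)$. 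Since the orbit relation of a group action is already an equivalence relation containing these pairs, it coincides with $\sim$, giving the explicit form
\[
(\alpha,g)\sim(\beta,h)\iff\exists\,b\in\pi_1(U-P):\ \iota(b)=gh^{-1}\ \text{and}\ \alpha^{b}=\beta .
\]
It then suffices to prove that this explicit condition is equivalent to $f^{-1}(\alpha)\Phi(gh^{-1})=f^{-1}(\beta)$.

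The forward implication is immediate. If such a $b$ exists, then by the commuting diagram preceding the lemma $\Phi(gh^{-1})=\Phi(\iota(b))=(f^{-1})_{*}(b)$, where $(f^{-1})_{*}\colon\pi_1(U-P)\to\pi_1(S)$ is the induced homomorphism; and since $f^{-1}$ is a homomorphism of augmented quandles it intertwines the actions, so $f^{-1}(\alpha)\Phi(gh^{-1})=f^{-1}(\alpha^{b})=f^{-1}(\beta)$.

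For the converse I would pass to the coset description of a knot quandle. Writing $E_{S}=S^{3}-N_{S}$ and choosing the basepoint on $T=\partial V$, the construction of Subsection \ref{sub23} realizes $Q(S)$, as a $\pi_1(S)$-set, as the coset space $\Pi\backslash\pi_1(S)$ with $\pi_1(S)$ acting by right multiplication, where $\Pi$ is the peripheral subgroup of $S$. Using Proposition \ref{prop1} I identify $\pi_1(S)\cong G_{P,C}$ through $\Phi$; as factors of an amalgamated product, $\pi_1(U-P)$ and $\pi_1(C)$ inject into $G_{P,C}$, and since $\partial N_{S}\subset V-N_{S}$ (and the reference path to the basepoint can be kept in $V-N_{S}$) the subgroup $\Pi$ lies inside $\iota(\pi_1(U-P))$. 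Under the same identifications $f^{-1}\colon Q(U-P)\to Q(S)$ becomes the inclusion $\Pi\backslash\pi_1(U-P)\hookrightarrow\Pi\backslash G_{P,C}$, which is injective because $\pi_1(U-P)\hookrightarrow G_{P,C}$ and $\Pi\le\iota(\pi_1(U-P))$. Now assume $f^{-1}(\alpha)\Phi(gh^{-1})=f^{-1}(\beta)$ and write $f^{-1}(\alpha)=\Pi b_{\alpha}$, $f^{-1}(\beta)=\Pi b_{\beta}$ with $b_{\alpha},b_{\beta}\in\iota(\pi_1(U-P))$. The coset equality $\Pi b_{\alpha}\,\Phi(gh^{-1})=\Pi b_{\beta}$ forces $\Phi(gh^{-1})\in b_{\alpha}^{-1}\Pi b_{\beta}\subseteq\iota(\pi_1(U-P))$, so $gh^{-1}=\iota(b)$ for a unique $b\in\pi_1(U-P)$; then $f^{-1}(\alpha^{b})=f^{-1}(\alpha)\Phi(\iota(b))=f^{-1}(\beta)$, whence $\alpha^{b}=\beta$ by injectivity of $f^{-1}$, which is exactly the explicit condition above.

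The main obstacle is this converse, and concretely the two structural inputs on which it rests: that $\pi_1(U-P)$ injects into $G_{P,C}$ and that the peripheral subgroup $\Pi$ is contained in $\iota(\pi_1(U-P))$. Both follow from Proposition \ref{prop1} together with the geometric fact that $\partial N_{S}$ sits on the pattern side $V-N_{S}$, but care is needed to keep the basepoint on $T$, to verify that the identification $\Phi$ carries the right $\pi_1(S)$-action to right multiplication on cosets (so that the acting element $\Phi(gh^{-1})$ appears on the correct side), and to confirm that $f^{-1}$ maps onto the $\pi_1(U-P)$-cosets inside $\Pi\backslash G_{P,C}$ injectively.
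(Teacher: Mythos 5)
Your proof is correct, and its skeleton coincides with the paper's: both unwind $\sim$ into the explicit condition that there exists $b\in\pi_1(U-P)$ with $\iota(b)=gh^{-1}$ and $\alpha^{b}=\beta$, both get $(\Rightarrow)$ from $\Phi\circ\iota=(f^{-1})_*$ and equivariance of $f^{-1}$, and both reduce $(\Leftarrow)$ to the containment $gh^{-1}\in \mathrm{Im}(\iota)$. The difference lies in how that containment is justified. The paper's proof simply asserts that ``the diagram implies $gh^{-1}\in \mathrm{Im}(\iota)$''; the actual content --- that an element of $\pi_1(S)$ carrying one class of $Q(S)$ represented in $V-N_S$ to another such class must come from $\pi_1(V-N_S)=\mathrm{Im}((f^{-1})_*)$ --- is left implicit, and would be argued geometrically via a homotopy whose track along $\partial N_S$ stays on the pattern side. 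You instead make this step explicit through the coset model $Q(S)\cong \Pi\backslash \pi_1(S)$, the inclusion of the peripheral subgroup $\Pi$ into $\iota(\pi_1(U-P))$ (since $\partial N_S$ and the reference path can be kept in $V-N_S$), and injectivity of the factors of the amalgamated product (which rests on incompressibility of $\partial V$). This buys a complete and checkable argument for precisely the step the paper glosses over, at the cost of importing the coset description of the knot quandle and its compatibility with $\Phi$ and $f^{-1}$; your closing sentence correctly flags these as the points needing care, and they all do hold in this setting.
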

\begin{proof}
$(\Rightarrow )$ Suppose that $(\alpha ,g)\sim (\beta ,h)$; then there exists a $c\in \pi _{1}(U-P)$ so that $g=\iota (c)h$ and $\beta =\alpha ^{c}$. It follows that $f^{-1}(\alpha )\Phi (gh^{-1})=f^{-1}(\alpha )\Phi (\iota (c))=f^{-1}(\alpha )f^{-1}(c) = f^{-1}(\alpha ^{c})=f^{-1}(\beta )$.   

$(\Leftarrow )$ Let $f^{-1}(\alpha )\Phi (gh^{-1})=f^{-1}(\beta )$ for two elements $(\alpha ,g),(\beta ,h)\in Q(U-P)\times G_{P,C}$. Then the diagram above implies that $gh^{-1}\in Im (\iota )$ and therefore $f(\Phi (gh^{-1}))=\iota ^{-1}(gh^{-1})$. We have $\alpha f(\Phi (gh^{-1}))=\alpha \iota ^{-1}(gh^{-1})=\beta $ and thus 
\begin{equation*}
(\beta ,h)=(\alpha ^{\iota ^{-1}(gh^{-1})},h)\sim (\alpha ,gh^{-1}h)=(\alpha ,g)\,.\qedhere
\end{equation*}
\end{proof}

\begin{lemma} \label{lemma1} The pair $\left ((Q(U-P)\times G_{P,C})/_{\sim },G_{P,C}\right )$, equipped with the map $\epsilon \colon (Q(U-P)\times G_{P,C})/_{\sim }\to G_{P,C}$, given by $\epsilon [\alpha ,g]=\overline{g}\,\overline{\alpha }m_{\alpha (0)}\alpha \,g$, and the canonical right  action of $G_{P,C}$ on the second entry, defines an augmented quandle. 
\end{lemma}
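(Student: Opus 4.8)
The plan is to verify directly the three requirements packaged in Definition \ref{def2}: that the proposed right action of $G_{P,C}$ on the quotient is well defined, that the augmentation $\epsilon$ descends to the $\sim$-classes, and that the two augmented-quandle axioms hold. Throughout I read the expression $\overline{g}\,\overline{\alpha}\,m_{\alpha(0)}\,\alpha\,g$ in the statement by first regarding $\overline{\alpha}\,m_{\alpha(0)}\,\alpha$ as the augmentation of $\alpha$ in the augmented quandle $Q(U-P)$ of Subsection \ref{sub23} and then including it into $G_{P,C}$ via $\iota$. To keep the two augmentation maps apart I write $\epsilon_{U-P}(\alpha)=\overline{\alpha}\,m_{\alpha(0)}\,\alpha\in\pi_1(U-P)$, so that by definition $\epsilon[\alpha,g]=g^{-1}\,\iota(\epsilon_{U-P}(\alpha))\,g$. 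The entire argument then amounts to transporting the augmented-quandle structure that $Q(U-P)$ already carries through the homomorphism $\iota$ and the amalgamation.

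First I would settle well-definedness. The action is $[\alpha,g]^{h}=[\alpha,gh]$; associativity and the identity axiom are automatic since $G_{P,C}$ acts on its second coordinate by right multiplication, and it respects $\sim$ because right-multiplying the defining relation $(\alpha,\iota(b)g)\sim(\alpha^{b},g)$ by $h$ yields $(\alpha,\iota(b)(gh))\sim(\alpha^{b},gh)$, again an instance of the same relation. For $\epsilon$ it suffices to check that its two values agree on the defining pair $(\alpha,\iota(b)g)$ and $(\alpha^{b},g)$. Expanding the first gives $g^{-1}\iota(b)^{-1}\iota(\epsilon_{U-P}(\alpha))\iota(b)\,g=g^{-1}\iota\!\big(b^{-1}\epsilon_{U-P}(\alpha)\,b\big)g$, while the second is $g^{-1}\iota(\epsilon_{U-P}(\alpha^{b}))\,g$; these coincide precisely because $Q(U-P)$ is an augmented quandle, whose augmentation satisfies $\epsilon_{U-P}(\alpha^{b})=b^{-1}\epsilon_{U-P}(\alpha)\,b$ (axiom (2) of Definition \ref{def2} applied to $Q(U-P)$), and $\iota$ is a group homomorphism.

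With well-definedness in hand the two axioms reduce to short computations. Axiom (2) is immediate: replacing $g$ by $gx$ in the formula for $\epsilon$ gives $\epsilon([\alpha,g]^{x})=\epsilon[\alpha,gx]=(gx)^{-1}\iota(\epsilon_{U-P}(\alpha))(gx)=x^{-1}\,\epsilon[\alpha,g]\,x$. For axiom (1), set $q=[\alpha,g]$; then $q^{\epsilon(q)}=[\alpha,\,g\cdot g^{-1}\iota(\epsilon_{U-P}(\alpha))\,g]=[\alpha,\iota(\epsilon_{U-P}(\alpha))\,g]$, and the defining relation with $b=\epsilon_{U-P}(\alpha)$ rewrites this as $[\alpha^{\epsilon_{U-P}(\alpha)},g]$. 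Since $Q(U-P)$ is an augmented quandle, its axiom (1) gives $\alpha^{\epsilon_{U-P}(\alpha)}=\alpha$, whence $q^{\epsilon(q)}=[\alpha,g]=q$.

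I expect the only genuinely delicate point to be the well-definedness of $\epsilon$ on the quotient: correctly identifying $\overline{\alpha}\,m_{\alpha(0)}\,\alpha$ with $\iota\circ\epsilon_{U-P}$ and checking that conjugation by the operator coordinate is compatible with the relation $\sim$. Once this bookkeeping is in place, both quandle axioms follow transparently from the corresponding axioms for $Q(U-P)$ pushed along $\iota$, and no further topological input is required.
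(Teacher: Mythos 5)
Your proof is correct and follows essentially the same route as the paper: a direct verification of the two augmented-quandle axioms by computing $[\alpha,g]^{\epsilon[\alpha,g]}=[\alpha,\iota(\epsilon_{U-P}(\alpha))g]\sim[\alpha^{\epsilon_{U-P}(\alpha)},g]=[\alpha,g]$ and $\epsilon([\alpha,g]^{h})=h^{-1}\epsilon[\alpha,g]h$. The only difference is that you explicitly verify that $\epsilon$ and the action descend to the $\sim$-classes (using axiom (2) of the augmented quandle $Q(U-P)$ pushed through $\iota$), a point the paper's proof leaves implicit; this is a correct and welcome addition rather than a divergence.
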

\begin{proof} The group $G_{P,C}$ acts on the product $Q(U-P)\times G_{P,C}$ by right multiplication on the second entry, and this action descends to the quotient $(Q(U-P)\times G_{P,C})/_{\sim }$. Compute 
\begin{eqnarray*}
 [\alpha ,g]^{\epsilon [\alpha ,g]}&=&[\alpha ,g]^{\overline{g}\,\overline{\alpha }m_{\alpha (0)}\alpha \,g}=[\alpha ,\overline{\alpha }m_{\alpha (0)}\alpha g]= [\alpha *\alpha ,g]=[\alpha ,g]\\
 \epsilon ([\alpha ,g]^{h})&=&\epsilon [\alpha ,gh]=\overline{h}\,\overline{g}\,\overline{\alpha }m_{\alpha (0)}\alpha \,g\,h=h^{-1}\epsilon [\alpha ,g]h
\end{eqnarray*} to see that the map $\epsilon $ satisfies both augmentation identities from Definition \ref{def2}. 
\end{proof}


\begin{Theorem} \label{th2} The fundamental quandle of the satellite knot $S$ is isomorphic to the augmented quandle $\left ((Q(U-P)\times G_{P,C})/_{\sim },G_{P,C}\right )$. 
\end{Theorem}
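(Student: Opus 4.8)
The plan is to construct an explicit isomorphism of augmented quandles between $Q(S)$ and the augmented quandle $\left((Q(U-P)\times G_{P,C})/_{\sim},G_{P,C}\right)$ built in Lemma \ref{lemma1}, using the geometric decomposition of the satellite knot complement along the incompressible torus $T=\partial V$. The key geometric input is that $E_S$, the exterior of $S$ in $S^3$, decomposes as $E_S = (S^3 - N_C) \cup_{\partial V} (V - N_S)$, glued along $T$, and that $V-N_S$ is homeomorphic via $f$ to the exterior $U-N_P$ of the pattern in the standard solid torus. This is precisely the geometry underlying Proposition \ref{prop1}, which already gives the group-level isomorphism $\Phi\colon G_{P,C}\to\pi_1(S)$; my job is to lift it to the quandle (path) level.

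First I would define the candidate map on underlying sets. An element of $Q(S)=\mathcal P_S$ is a homotopy class of a path $\gamma$ in $E_S$ from $\partial N_S$ to the basepoint. Since the meridian of $S$ lives inside $V-N_S$, any such path can be homotoped to first travel from $\partial N_S$ to a point on $T=\partial V$ staying inside $V-N_S$ — giving an element $\alpha\in Q(U-P)$ after applying $f$ — and then to continue from $T$ to the global basepoint $z$ along a path in $E_S$, whose homotopy class records an element of $\pi_1(S)$ and hence, via $\Phi^{-1}$, an element $g\in G_{P,C}$. This suggests the map $\Psi\colon (Q(U-P)\times G_{P,C})/_{\sim}\to Q(S)$ sending $[\alpha,g]\mapsto f^{-1}(\alpha)\cdot\Phi(g)$, where $f^{-1}(\alpha)$ is interpreted as a path-class in $E_S$ (the pattern path pushed into the companion solid torus) and then acted on by $\Phi(g)\in\pi_1(S)$. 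I would take $\Psi$ together with $\Phi$ as the candidate augmented-quandle homomorphism.

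Next I would verify the required properties in sequence. Well-definedness on the quotient follows directly from Lemma \ref{lemma0}: the characterization $(\alpha,g)\sim(\beta,h)\iff f^{-1}(\alpha)\Phi(gh^{-1})=f^{-1}(\beta)$ says exactly that $\Psi$ is constant on $\sim$-classes. Compatibility with the $G_{P,C}$-action is immediate since $\Phi$ is a group homomorphism and the action on both sides is by right multiplication/path-concatenation. Compatibility with the augmentation maps reduces to checking $\Phi(\epsilon[\alpha,g]) = \epsilon(\Psi[\alpha,g])$; here the left side is $\Phi(\overline{g}\,\overline{\alpha}m_{\alpha(0)}\alpha\,g)$ and the right side is the meridian-conjugate augmentation of the path $f^{-1}(\alpha)\Phi(g)$ in $E_S$, and these agree because $f$ carries the meridian $m_{\alpha(0)}$ of $P$ in $U$ to the meridian of $S$ in $V$ (the meridian is preserved since $S=f^{-1}(P)$ and its regular neighborhood is carried to that of $P$).

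The main obstacle, and the heart of the proof, will be establishing that $\Psi$ is a bijection. Surjectivity requires showing every path-class in $E_S$ can be put in the normal form described above — travel inside $V-N_S$ to $T$, then into $S^3-N_C$ — which follows from transversality of a representative path with the incompressible torus $T$ together with the amalgamation structure of $\pi_1(S)$ from Proposition \ref{prop1}. Injectivity is the delicate part: I would need to show that if two pairs give homotopic paths in $E_S$, then they are $\sim$-equivalent, and the crucial leverage is the incompressibility of $T$, which by Dehn's lemma/loop theorem forces $\pi_1(T)\to\pi_1(E_S)$ to be injective and hence makes the normal form for paths rigid across the gluing, exactly as it makes the amalgamated product in Proposition \ref{prop1} a genuine pushout. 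I expect to invoke Lemma \ref{lemma0} once more to translate the path equality back into the algebraic relation $\sim$, so that the geometric injectivity statement becomes precisely the well-definedness criterion run in reverse. Packaging this normal-form/uniqueness argument carefully — rather than the formal checks of augmentation compatibility — is where the real work lies.
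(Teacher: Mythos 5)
Your proposal is correct and follows essentially the same route as the paper: the same map $\Psi[\alpha,g]=f^{-1}(\alpha)\Phi(g)$, well-definedness and injectivity handled via Lemma \ref{lemma0}, surjectivity via transverse intersection of a representative path with $\partial V$ and the resulting normal form, and a direct check of the augmentation diagram. The only remark worth making is that you anticipate injectivity as the delicate step requiring fresh use of incompressibility, whereas given Lemma \ref{lemma0} as an equivalence it is immediate ($f^{-1}(\alpha)\Phi(g)=f^{-1}(\beta)\Phi(h)$ yields $f^{-1}(\alpha)\Phi(gh^{-1})=f^{-1}(\beta)$, hence $(\alpha,g)\sim(\beta,h)$); the geometric content you point to is already absorbed into the proof of that lemma.
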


\begin{proof} The fundamental quandle of the satellite has a natural structure of an augmented quandle $(Q(S),\pi _{1}(S))$. Choose a point $z\in \partial V$, that will serve as a common basepoint for $Q(S)$, $\pi _{1}(S)$ and $\pi _{1}(C)$. The action of $\pi _{1}(S)$ on $Q(S)$ is given by $\alpha ^{g}=\alpha g$ for any $\alpha \in Q(S)$ and $g\in \pi _{1}(S)$. As we recall from Subsection \ref{sub23}, the augmentation map $\epsilon _{S}\colon Q(S)\to \pi _{1}(S)$ is given by $\epsilon _{S}(\alpha )=\overline{\alpha }m_{\alpha (0)}\alpha $.

Now consider the augmented quandle $\left ((Q(U-P)\times G_{P,C})/_{\sim },G_{P,C}\right )$, obtained in Lemma \ref{lemma1}. We need to show that the augmented quandles $\left ((Q(U-P)\times G_{P,C})/_{\sim },G_{P,C}\right )$ and $(Q(S),\pi _{1}(S))$ are isomorphic (see  Definition \ref{def6}).

By Proposition \ref{prop1}, the groups $G_{P,C}$ and $\pi _{1}(S)$ are isomorphic, and, as recalled, the isomorphism $\Phi \colon G_{P,C}\to \pi _{1}(S)$ is given by $\Phi \left(c_{1}p_{1}\ldots c_{r}p_{r}\right)=c_{1}f^{-1}(p_{1})\ldots c_{r}f^{-1}(p_{r})$. Define a map $\Psi \colon (Q(U-P)\times G_{P,C})/_{\sim }\to Q(S)$ by $$\Psi \left [\alpha ,g\right ]=f^{-1}(\alpha )^{\Phi \left (g\right )}=f^{-1}(\alpha)\Phi \left (g\right )\;.$$ 
First observe that for any $\alpha \in Q(U-P)$, $b \in \pi _{1}(U-P)$ and $g\in G_{P,C}$ we have 
$$\Psi [\alpha ,\iota(b)g]=f^{-1}(\alpha )\Phi (\iota(b)g)=f^{-1}(\alpha )f^{-1}(b)\Phi (g)=f^{-1}(\alpha ^{b})\Phi (g)=\Psi [\alpha ^{b},g]\;,$$ thus $\Psi $ is well defined. It follows from Lemma \ref{lemma0} that $\Psi $ is injective. 

To show that $\Psi $ is surjective, choose an element $s\in Q(S)$. Recall that $s$ is the homotopy class of a path $\gamma \colon I\to E_{S}$ from $\gamma (0)\in \partial N_{S}$ to $\gamma (1)=z$. We may assume that the trajectory of $\gamma $ intersects the torus $\partial V$ transversely in a finite set of points. Define 
$$t_{0}=  \begin{cases}
& \min \,\{t\in I\colon \gamma (t)\in \partial V\}\;, \textrm{ if }\gamma (I)\cap \partial V\neq \emptyset\\
& 1\;, \textrm{ otherwise}.
\end{cases} $$ Choose a path $\beta \colon I\to \partial V$ from $\gamma (t_{0})$ to $z$. Then $\gamma \simeq \gamma |_{[0,t_{0}]}\,\beta \, \overline{\beta }\, \gamma |_{[t_{0},1]}$. Note that $\alpha :=\gamma |_{[0,t_{0}]}\, \beta $ is a path in the $\textup{closure}(V-N_{S})$ from $\partial N_{S}$ to the basepoint $z$, while $g:=\overline{\beta }\, \gamma |_{[t_{0},1]}$ is a loop in $E_{S}$, based at $z$. It follows that $\gamma \simeq f^{-1}(f(\alpha ))\Phi (\Phi ^{-1}(g))$ and thus $\gamma =\Psi [f(\alpha ),\Phi ^{-1}(g)]$.

Consider the diagram 
\begin{displaymath}
\xymatrix{
\left (Q(U-P)\times G_{P,C}\right )/_{\sim }\times G_{P,C} \ar@{->}[d]^{\Psi \times \Phi } \ar@{->}[r] & \left (Q(U-P)\times G_{P,C} \right )/_{\sim }\ar@{->}[r]^{\quad \epsilon} \ar@{->}[d]^{\Psi } & G_{P,C} \ar@{->}[d]^{\Phi }\\
Q(S)\times \pi _{1}(S) \ar@{->}[r] & Q(S) \ar@{->}[r]^{\epsilon _{S}} & \pi _{1}(S)}
\end{displaymath}
 The diagram above is commutative, indeed we have
\begin{eqnarray*}
 \Psi \left (\left [\alpha ,g\right ]^{\epsilon \left [\beta ,h\right ]}\right )&=&\Psi \left [\alpha ,g\epsilon (\beta ,h)]\right ]=\Psi \left [\alpha ,g\overline{h}\,\overline{\beta }m_{\beta (0)}\beta h\right ]\\ 
 (\Psi \times \Phi )[\alpha ,g]^{\epsilon [\beta ,h]}&=& \left(f^{-1} (\alpha )\Phi (g)\right )^{\Phi (\epsilon [\beta ,h])}=f^{-1}(\alpha)\Phi( g\overline{h}\,\overline{\beta }m_{\beta (0)}\beta h)\\
 \Phi (\epsilon (\alpha ,g))&=&\Phi \left (\overline{g}\,\overline{\alpha }m_{\alpha (0)}\alpha g\right )\\
 \epsilon _{S}\left (\Psi [\alpha ,g]\right )&=&\epsilon _{S}\left (f^{-1}(\alpha)\Phi (g)\right )=\Phi \left (\overline{g}\,\overline{\alpha }m_{\alpha (0)}\alpha g\right )
\end{eqnarray*}
for every $\left [\alpha ,g\right ]$ and $\left [\beta ,h\right ]$ in $(Q(U-P)\times G_{P,C})/_{\sim }$.
 
 We have shown that $\Phi $ and $\Psi $ are both bijections, thus they define an isomorphism of augmented quandles. 
\end{proof}

By Theorem \ref{th2}, a presentation for the fundamental quandle of a satellite knot may be recovered from the presentations of fundamental quandles of companion and pattern knots. Note that the pattern knot is a knot in the solid torus, whose fundamental quandle has been described in Section \ref{sec3}. 

\begin{corollary} \label{cor_pres} Let $S$ be a satellite knot with companion $C$ and pattern $P$. Let $f\colon V\to U$ be a homeomorphism between a regular neighbourhood of $C$ and an unknotted solid torus $U$, such that $f(S)=P$. Denote by $\mu _{V}$ and $\lambda _{V}$ (respectively $\mu _{U}$ and $\lambda _{U}$) the meridian and preferred longitude of the solid torus $V$ (respectively $U$). Given a general presentation $[S_{P1},S_{P2}\colon R_{P1},R_{P2}]$ of the augmented fundamental quandle $Q(U-P)$ and a presentation $\langle S_{C}\colon R_{C}\rangle $ of the fundamental group $\pi_1(C)$, the augmented fundamental quandle $Q(S)$ has a general presentation 
\begin{xalignat}{1} \label{present} 
& \left [S_{P1}, S_{P2}\cup S_{C}\, \colon \, R_{P1},R_{P2}\cup R_{C}\cup \{i_{1}(\mu _{V})=i_{2}(\mu _{U}), i_{1}(\lambda _{V})=i_{2}(\lambda _{U})\}\right ]\;,
\end{xalignat}
where $i_{1}\colon \partial V\to S^{3}-C$ and $i_{2}\colon \partial U\to U-P$ are respective inclusions.
\end{corollary}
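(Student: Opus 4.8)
The plan is to read the presentation straight off the isomorphism of augmented quandles supplied by Theorem \ref{th2}, namely $Q(S)\cong\left((Q(U-P)\times G_{P,C})/_{\sim},G_{P,C}\right)$. Since an isomorphism of augmented quandles carries a general presentation of one to a general presentation of the other, it suffices to exhibit the displayed presentation \eqref{present} for the right-hand augmented quandle. This splits into two tasks: (i) present the operator group $G_{P,C}$ abstractly, and (ii) describe the underlying quandle, with its $G_{P,C}$-action, by generators and relations.

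For (i) I would use that $G_{P,C}=\pi_1(C)*_{(\pi_1(\partial V),i_1,i_2)}\pi_1(U-P)$ is an amalgamated free product, whose standard presentation is the disjoint union of presentations of the two factors together with, for each generator of the amalgamating subgroup $\pi_1(\partial V)\cong\Z^2$, one relation identifying its two images. Presenting $\pi_1(\partial V)$ by its meridian $\mu_V$ and longitude $\lambda_V$ (with $f(\mu_V)=\mu_U$, $f(\lambda_V)=\lambda_U$), these identifications are exactly $i_1(\mu_V)=i_2(\mu_U)$ and $i_1(\lambda_V)=i_2(\lambda_U)$. For the factors I take $\langle S_C:R_C\rangle$ for $\pi_1(C)$, and for $\pi_1(U-P)$ the group presentation induced by the given quandle presentation, that is $\langle S_{P1}\cup S_{P2}:\epsilon(R_{P1}),R_{P2}\rangle$, where $\epsilon(R_{P1})$ denotes the group relations $\epsilon(x)=\epsilon(y)$ forced by the primary relations $(x=y)\in R_{P1}$; this is how the augmentation group of a general presentation is read off. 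Hence $G_{P,C}$ is generated by $S_{P1}\cup S_{P2}\cup S_C$ with relations $\epsilon(R_{P1})\cup R_{P2}\cup R_C$ plus the two gluing relations, which is precisely the operator data of \eqref{present}.

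For (ii) I would invoke the normal form of Remark \ref{remark on generators}: every element of the presented quandle $\hat{Q}:=[S_{P1},S_{P2}\cup S_C:R_{P1},R_{P2}\cup R_C\cup\{\text{gluing}\}]$ has the form $[(x,1)]\cdot g$ with $x\in S_{P1}$ and $g\in G_{P,C}$; and, after using $\sim$ to absorb a $\pi_1(U-P)$-operator into the operator coordinate, every element of $(Q(U-P)\times G_{P,C})/_{\sim}$ has the form $[[(x,1)],g]$. I then define an augmented quandle homomorphism $\Theta$ out of $\hat Q$ sending the primary generator $x$ to $[[(x,1)],1]$ and using the tautological map $F(S_{P1}\cup S_{P2}\cup S_C)\to G_{P,C}$ on operators. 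Well-definedness reduces to checking that the defining relations of $\hat Q$ hold in the target: the primary relations $R_{P1}$ hold because $\alpha\mapsto[\alpha,1]$ is a quandle embedding of $Q(U-P)$ (a one-line check that $[\alpha,1]*[\beta,1]=[\alpha*\beta,1]$ using the augmentation of Lemma \ref{lemma1}), while $R_{P2}$, $R_C$ and the gluing relations hold since they are relations of $G_{P,C}$ by step (i). For the inverse I would first build $\Theta_0\colon Q(U-P)\to\hat Q$ from the pattern's presentation ($x\mapsto[(x,1)]$, with $\pi_1(U-P)\hookrightarrow G_{P,C}$ on operators via $\iota$), set $\Xi(\alpha,g):=\Theta_0(\alpha)\cdot g$, and verify $\sim$-invariance through $\Theta_0(\alpha^b)=\Theta_0(\alpha)\cdot\iota(b)$. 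The maps $\Theta$ and the induced $\Xi$ are then mutually inverse on normal forms, and they intertwine the augmentations because $\epsilon[[(x,1)],g]=g^{-1}\iota(x)g=\epsilon_{\hat Q}([(x,1)]\cdot g)$.

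The main obstacle I anticipate is bookkeeping rather than conceptual: ensuring the operator group of $\hat Q$ is genuinely $G_{P,C}$ and not a larger group. This is the one place where I must simultaneously use that the operator group of a general presentation $[S_P,S_O:R_P,R_O]$ equals $\langle S_P\cup S_O:\epsilon(R_P),R_O\rangle$ and that this group coincides with the amalgamated-product presentation of step (i). The only other delicate point is confirming that no spurious primary relations appear in $(Q(U-P)\times G_{P,C})/_{\sim}$: the relation $\sim$ merely transfers $\pi_1(U-P)$-operators between the two coordinates and therefore contributes nothing beyond the operator action, which is exactly why the primary relations of $\hat Q$ remain just $R_{P1}$ while the companion group $\pi_1(C)$ enlarges only the operator part.
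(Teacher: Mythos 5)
Your argument is correct and follows the same route as the paper, which states Corollary \ref{cor_pres} as an immediate consequence of Theorem \ref{th2} without written proof: you read the presentation off the augmented quandle $\left((Q(U-P)\times G_{P,C})/_{\sim},G_{P,C}\right)$ by combining the standard presentation of the amalgamated product $G_{P,C}$ with the normal form of Remark \ref{remark on generators}. Your write-up simply supplies the bookkeeping (well-definedness of $\Theta$ and $\Xi$, compatibility with $\sim$ and with the augmentations) that the authors leave implicit, and it does so correctly.
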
 

\begin{remark}\label{standard meridians and longitudes}
 When using the general presentation \eqref{present} to find the fundamental quandle of a satellite knot based on a knot diagram, meridians and longitudes of the solid tori $U$ and $V$ may be chosen in a standard way. Namely, in the diagram of the companion knot, the generator of $\pi _{1}(C)$ corresponding to any arc may be chosen as the meridian $\mu _{V}$. The longitude $\lambda _{V}$ is obtained as the product of generators corresponding to arcs one passes under when going around the companion knot once, taking each term of the product to the power that equals the sign of the crossing. The longitude $\lambda _{U}$ corresponds to the ``red'' arc in the diagram of the pattern (this is the arc $a_{1}$ in Figure \ref{fig2}). The meridian $\mu _{U}$ corresponds to the product $\epsilon (x_{1})^{\delta _{1}}\epsilon (x_{2})^{\delta _{2}}\ldots \epsilon (x_{d})^{\delta _{d}}$ in Figure \ref{fig2}, where $w=\sum _{j=1}^{d}\delta_j$ is the winding number of the knot.
\end{remark}

\begin{figure}[h!]
\begin{center}
\includegraphics[scale=0.6]{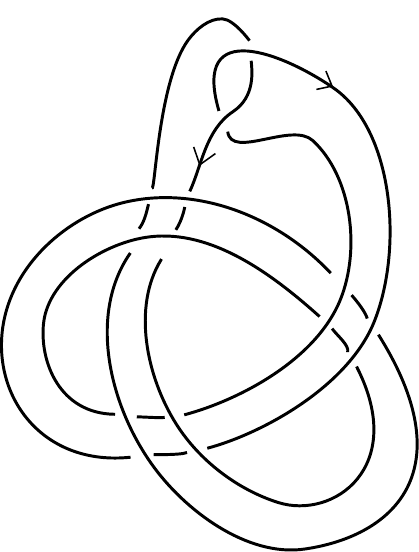}
\caption{The double of the trefoil knot}
\label{fig4}
\end{center}
\end{figure}

\begin{example}[The double of the trefoil knot] \label{ex1} Consider the simple satellite in Figure \ref{fig4}, whose companion and pattern knots are given by Figure \ref{fig5}. The presentation of the fundamental quandle of $P$ is given by $$Q(U-P)=\left [\{x_{1},x_{2},x_{3},x_{4}\}, \{a\}\, \colon \{x_{1}*x_{4}=x_{2}, x_{4}*x_{1}=x_{3}, x_{3}^{a}=x_{1}, x_{4}^{a}=x_{2}\}, \{[a,\overline{\epsilon (x_{1})}\epsilon (x_{2})]=1\}\right ]\;,$$ while the fundamental group of the trefoil is given by the presentation 
\begin{align*}
\pi _{1}(C)&=\langle\{y_{1}, y_{2},y_{3}\, \}\, \colon \{\overline{y}_{3}y_{1}y_{3}=y_{2}, \overline{y}_{2} y_{3} y_{2}=y_{1}, \overline{y}_{1} y_{2} y_{1}=y_{3}\}\rangle. 
\end{align*} We may assume that $i_{1}(\mu _{V})=y_{3}$, $i_{1}(\lambda _{V})=y_{3}y_{1}y_{2}$, $i_{2}(\mu _{U})=\overline{\epsilon (x_{1})}\epsilon (x_{2})$ and $i_{2}(\lambda _{U})=a$. It follows that the fundamental quandle of the satellite is given by the presentation 
\begin{xalignat*}{1}
& Q(S)=[\{x_{1},x_{2},x_{3},x_{4}\},\, \{a, y_{1}, y_{2},y_{3}\, \}\, \colon  \{x_{1}*x_{4}=x_{2}, x_{4}*x_{1}=x_{3}, x_{3}^{a}=x_{1}, x_{4}^{a}=x_{2}\},\\ 
& \{[a,\overline{\epsilon (x_{1})}\epsilon (x_{2})]=1, \overline{y}_{3}y_{1}y_{3}=y_{2}, \overline{y}_{2}y_{3}y_{2}=y_{1}, \overline{y}_{1}y_{2}y_{1}=y_{3}, y_{3}=\overline{\epsilon (x_{1})}\epsilon (x_{2}), a=y_{3}y_{1}y_{2} \}] 
\end{xalignat*} that simplifies to 
\begin{xalignat*}{1}
& Q(S)=[\{x_{1},x_{4}\},\, \{y_{1}, y_{2}\, \}\, \colon  \{(x_{4}*x_{1})^{\overline{y}_{1}y_{2}y_{1}^{2}y_{2}}=x_{1}, x_{4}^{\overline{y}_{1}y_{2}y_{1}^{2}y_{2}}=x_{1}*x_{4}\},\\ 
& \{y_{1}y_{2}y_{1}=y_{2}y_{1}y_{2}, y_{1}^{2}y_{2}^{2}y_{1}=y_{2}y_{1}^{2}y_{2}^{2}, \overline{y}_{1}y_{2}y_{1}=\overline{\epsilon (x_{1})}\epsilon (x_{1}*x_{4}) \}] \;.
\end{xalignat*}
\begin{figure}[h!]
\labellist
\normalsize \hair 2pt 
\pinlabel $y_3$ at 120 232
\pinlabel $y_1$ at 160 100
\pinlabel $y_2$ at -10 150
\pinlabel $x_{2}$ at 400 110 
\pinlabel $x_{1}$ at 465 110
\pinlabel $x_{4}$ at 300 110
\pinlabel $x_{3}$ at 240 110
\pinlabel $a$ at 370 8
\pinlabel $P$ at 450 210
\pinlabel $C$ at 20 210
\endlabellist
\begin{center}
\includegraphics[scale=0.6]{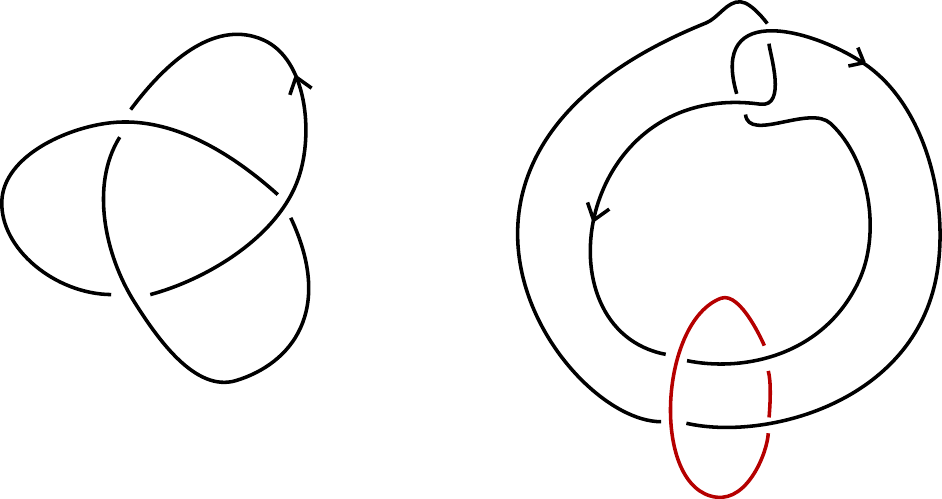}
\caption{Companion and pattern of the knot from Example \ref{ex1}}
\label{fig5}
\end{center}
\end{figure}
\end{example}

\begin{example}[$(5,2)$-cable of the trefoil knot]  A \textit{cable knot} is a satellite knot whose pattern is a torus knot. For coprime integers $p$ and $q$, the torus knot of type $(p,q)$ is the knot $T(p,q)$, that is ambient isotopic to the simple closed curve $p\mu_U+q\lambda_U$ on $\partial U$. The satelllite knot with companion $C$ and pattern $T(p,q)$ is called a $(p,q)$-cable of $C$. 
\begin{figure}[h!]
\labellist
\normalsize \hair 2pt
\pinlabel $x_{3}$ at 40 140
\pinlabel $x_{0}$ at 35 40
\pinlabel $x_{p+1}$ at 130 -10
\pinlabel $x_{p}$ at 200 55
\pinlabel $x_{4}$ at 160 155
\pinlabel $a$ at -8 100
\pinlabel $x_{1}$ at 30 70
\pinlabel $x_{2}$ at 65 100
\endlabellist
\begin{center}
\includegraphics[scale=0.6]{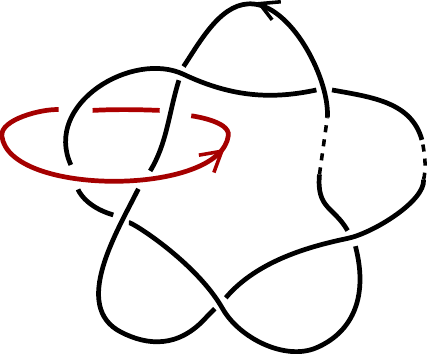}
\caption{Diagram of the torus knot $T(p,2)$ inside the solid torus $U$.}
\label{fig6}
\end{center}
\end{figure}

Consider the torus knot $T(p,2)$ lying on the boundary of the solid torus $U$, whose diagram is shown in Figure \ref{fig6}. As a knot in a solid torus, its fundamental quandle admits a presentation
\begin{xalignat*}{1}
& Q(U-P) = [\{x_{0},x_{1},\ldots ,x_{p+1}\}, \{a\}\, \colon \\
& \{x_{1}\ast x_{0}=x_{p+1},x_{0}\ast x_{p+1}=x_{p},\ldots ,x_{4}\ast x_{3}=x_{2},x_{1}^{a}=x_{3}, x_{0}^{a}= x_{2}\}, \{[a,\overline{\epsilon (x_{2}})\,\overline{\epsilon (x_{3})}]=1\}] \;,
\end{xalignat*} that simplifies to 
\begin{xalignat*}{1}
& Q(U-P) = [\{x_{0},x_{1}\}, \{a\}\, \colon \{x_{1}^{a}=\overbrace{x_{0}\ast x_{1}\ast \ldots \ast x_{0}}^{p},\, x_{0}^{a}=\overbrace{x_{1}\ast x_{0}\ast \ldots \ast x_{0}}^{p+1}\},\\
& \{[\epsilon (\underbrace{x_{0}\ast x_{1}\ast \ldots \ast x_{0}}_{p})\epsilon (\underbrace{x_{1}\ast x_{0}\ast \ldots \ast x_{0}}_{p+1}),a]=1\}] \;.
\end{xalignat*}  The fundamental group of the trefoil knot has a presentation
\begin{align*}
\pi _{1}(C)&=\langle\{y_{1}, y_{2}, y_{3}\, \}\, \colon \{\overline{y}_{3}y_{1}y_{3}=y_{2}, \overline{y}_{2} y_{3} y_{2}=y_{1}, \overline{y}_{1} y_{2} y_{1}=y_{3}\}\rangle. 
\end{align*} We may assume that $i_{1}(\mu _{V})=y_{3}$, $i_{1}(\lambda _{V})=y_{3}y_{1}y_{2}$, $i_{2}(\mu _{U})=\overline{\epsilon (x_{2})}\, \overline{\epsilon (x_{3})}$ and $i_{2}(\lambda _{U})=a$. 
Using Corollary \ref{cor_pres}, it is possible to obtain 
a presentation for the $(p,2)$ cable of the trefoil knot. Just as an example, we work out computations in the case $p=5$:
\begin{xalignat*}{1}
& Q(S)=[\{x_{0},x_{1}\},\, \{a, y_{1}, y_{2},y_{3}\, \}\colon \{x_{1}^{a}=x_{0}\ast x_{1}\ast x_{0}\ast x_{1}\ast x_{0},\, x_{0}^{a}=x_{1}\ast x_{0}\ast x_{1}\ast x_{0}\ast x_{1}\ast x_{0}\},\\
& \{[\epsilon (x_{0}\ast x_{1}\ast x_{0}\ast x_{1}\ast x_{0})\epsilon (x_{1}\ast x_{0}\ast x_{1}\ast x_{0}\ast x_{1}\ast x_{0}),a]=1,\overline{y}_{3}y_{1}y_{3}=y_{2}, \overline{y}_{2}y_{3}y_{2}=y_{1}, \\
& \overline{y}_{1}y_{2}y_{1}=y_{3}, y_{3}=\overline{\epsilon (x_{1}\ast x_{0}\ast x_{1}\ast x_{0}\ast x_{1}\ast x_{0})}\,\overline{\epsilon (x_{0}\ast x_{1}\ast x_{0}\ast x_{1}\ast x_{0}}), a=y_{3}y_{1}y_{2} \}] 
\end{xalignat*} that simplifies to 
\begin{xalignat*}{1}
& Q(S)=[\{x_{0},x_{1}\},\, \{y_{1}, y_{2}\, \}\colon \\
& \{x_{1}^{\overline{y}_{1}y_{2}y_{1}^{2}y_{2}}=x_{0}\ast x_{1}\ast x_{0}\ast x_{1}\ast x_{0}, x_{0}^{\overline{y}_{1}y_{2}y_{1}^{2}y_{2}}=x_{1}\ast x_{0}\ast x_{1}\ast x_{0}\ast x_{1}\ast x_{0}\},\\
& \{y_{1}y_{2}y_{1}=y_{2}y_{1}y_{2}, \overline{y}_{1}y_{2}y_{1}=\overline{\epsilon (x_{1}\ast x_{0}\ast x_{1}\ast x_{0}\ast x_{1}\ast x_{0})}\,\overline{\epsilon (x_{0}\ast x_{1}\ast x_{0}\ast x_{1}\ast x_{0}}),\\
& y_{1}y_{2}\overline{y}_{1}y_{2}y_{1}=y_{2}y_{1}\overline{y}_{2}y_{1}y_{2}\}]\;.
\end{xalignat*}
\end{example}

Theorem \ref{th2} describes a decomposition of the fundamental quandle of a satellite knot along the incompressible torus $\partial V$ that is the boundary of a regular neighbourhood of the companion knot. 
The quandle of a satellite knot $S$ may also be decomposed into simpler pieces in a different way. By choosing a suitable 2-sphere that intersects $S$ in a finite number of points, the satellite knot is expressed as a closure of the tensor product of two tangles: one of them belongs to the pattern in the other one to companion knot. By studying the fundamental quandle functor defined in \cite{CH}, the fundamental quandle of a satellite knot was expressed in terms of quandles of tangles in its decomposition.

 



\section{The Alexander module of a connected quandle} \label{sec6}

	\subsection{Abelian and affine quandles} \label{sub61}


In \cite{CP} the authors developed a commutator theory for quandles in the sense of Freese and McKenzie \cite{comm}. Such theory defines the notions of abelianness and centrality for congruences of arbitrary algebraic structures and consequently also the notions of abelian, nilpotent and solvable algebraic structures. For racks and quandles, such properties are completely reflected by the group theoretical properties of the displacement group and its subgroups. For sake of simplicity, we use the characterization of abelian quandles obtained in \cite{CP} as the definition: a quandle $Q$ is said to be {\it abelian} if $\dis(Q)$ is abelian and semiregular (the pointwise stabilizers are trivial).
%

Let us denote by $\gamma_Q$ the smallest congruence of $Q$ with abelian quotient (such congruence can be defined by any algebraic structure). The quotient $Q/\gamma_Q$ can be also defined by the following universal property: if there exists a surjective quandle homomorphism $h:Q\longrightarrow Q^\prime$ and $Q^\prime$ is abelian, then $h$ factors through $Q/\gamma_Q$. 

\begin{lemma}\label{char}
Let $Q$ be a quandle, $N$ be a normal subgroup of $\aut(Q)$ and let $\sim_N$ be the orbit decomposition with respect to $N$. Then $\sim_N$ is a congruence of $Q$ and
$$\pi_N:\aut(Q)\longrightarrow \aut(Q/\sim_N),\quad h\mapsto \pi_N(h)$$
defined by setting $\pi_N(h)([a])=[h(a)]$ is a well defined group homomorphism.
\end{lemma}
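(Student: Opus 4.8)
The plan is to verify the two assertions separately. The engine behind both is the conjugation formula for right translations: for any $\phi\in\aut(Q)$ and any $a\in Q$ one has $\phi R_a\phi^{-1}=R_{\phi(a)}$, which follows at once from the fact that $\phi$ preserves $*$ (indeed $(\phi R_a\phi^{-1})(x)=\phi(\phi^{-1}(x)*a)=x*\phi(a)$). Combined with the hypothesis that $N$ is normal in $\aut(Q)$, this lets me move elements of $N$ past translations while staying inside $N$, which is exactly what a congruence check requires.

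To show that $\sim_N$ is a congruence, I first note that, being the orbit relation of the group $N$ acting on $Q$, it is automatically an equivalence relation; it remains to check compatibility with $*$ and with $\up$. Suppose $x\sim_N y$ and $z\sim_N w$, say $y=n(x)$ and $w=m(z)$ with $m,n\in N$. Writing products as compositions (with the convention $fg(x)=f(g(x))$), I would compute $y*w=R_w(n(x))=R_{m(z)}\,n(x)=m R_z m^{-1} n(x)$, whereas $x*z=R_z(x)$. Hence $y*w=k(x*z)$ with $k=m\,(R_z m^{-1} R_z^{-1})(R_z n R_z^{-1})$, and each of the three factors lies in $N$ by normality, so $k\in N$ and $x*z\sim_N y*w$. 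The verification for $\up$ is identical after replacing $R_z$ by its inverse $R_z^{-1}$, which is again an element of $\aut(Q)$; this proves the first claim and in particular endows the quotient $Q/\sim_N$ with a well-defined quandle operation $[a]*[b]=[a*b]$.

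For the second claim I would proceed in the usual order. Fix $h\in\aut(Q)$. The map $\pi_N(h)\colon[a]\mapsto[h(a)]$ is well defined because $a\sim_N b$ forces $b=n(a)$ and then $h(b)=(hnh^{-1})(h(a))$ with $hnh^{-1}\in N$, so $[h(b)]=[h(a)]$. It is a quandle homomorphism of $Q/\sim_N$ since $\pi_N(h)([a]*[b])=[h(a*b)]=[h(a)*h(b)]=\pi_N(h)([a])*\pi_N(h)([b])$, and it is bijective because a direct check shows $\pi_N(h^{-1})$ is its two-sided inverse; thus $\pi_N(h)\in\aut(Q/\sim_N)$. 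Finally $\pi_N(gh)([a])=[g(h(a))]=\pi_N(g)(\pi_N(h)([a]))$, so $\pi_N$ is a group homomorphism.

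The only genuinely delicate point is the first claim: everything hinges on the interplay between the conjugation formula and the normality of $N$, that is, on recognizing that $y*w$ differs from $x*z$ exactly by the action of $k=m(R_z m^{-1}R_z^{-1})(R_z n R_z^{-1})\in N$. Once this observation is in place, well-definedness of $\pi_N(h)$ and of the quotient operation, together with the homomorphism property, are routine.
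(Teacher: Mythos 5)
Your proof is correct and follows essentially the same route as the paper: both arguments rest on the conjugation identity $\phi R_a\phi^{-1}=R_{\phi(a)}$ together with normality of $N$ to exhibit $y*w$ (resp.\ $h(b)$) as the image of $x*z$ (resp.\ $h(a)$) under an element of $N$. The only difference is cosmetic — the paper packages the relevant element of $N$ as $n_1R_a^{\pm1}n_1^{-1}n_2R_a^{\mp1}$ and treats $*$ and $\up$ simultaneously via the $\pm1$ exponent, while you factor it into three conjugates and spell out the routine verifications (homomorphism property, bijectivity, multiplicativity of $\pi_N$) that the paper leaves implicit.
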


\begin{proof}
Let $a \sim_N c$ and $b\sim_N d$, i.e. $c=n_1(a)$ and $d=n_2(b)$ for some $n_1,n_2\in N$. Then
$$R_c^{\pm 1}(d)=R_{n_1(a)}^{\pm 1}n_2(b)=\underbrace{n_1 R_{a}^{\pm 1}n_1^{-1}n_2 R_a^{\mp 1}}_{\in N} R_a^{\pm 1}(b).$$
Therefore $R_c^{\pm 1}(d) \sim_N R_a^{\pm 1}(b)$ and so $\sim_N$ is a congruence. 

 For the latter statement, we need to prove that if $a \sim_N b$, then $h(a) \sim_N h(b)$ for every $h\in \aut(Q)$. Let $b=n(a)$ for $n\in N$, we have
$$h(b)=h(n(a))=hnh^{-1}h(a)$$
and since $hnh^{-1}\in N$, then $h(a)\sim_N h(b)$. 
\end{proof}

 According to \cite{GB}, if $Q$ is a connected quandle, then $\gamma_Q$ is the orbit decomposition with respect to the derived subgroup of $\dis(Q)$. In such case, we can apply Lemma \ref{char}.

\begin{corollary}\label{homo}
Let $Q$ be a connected quandle. Then 
$$\pi_{\gamma}:\aut(Q)\longrightarrow \aut(Q/\gamma_Q),\quad h\mapsto \pi_\gamma(h)$$
defined by setting $\pi_{\gamma}(h)([a])=[h(a)]$, is a well defined group homomorphism.
\end{corollary}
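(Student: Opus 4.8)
The plan is to deduce the corollary directly from Lemma~\ref{char} by exhibiting a normal subgroup $N\trianglelefteq\aut(Q)$ whose orbit decomposition $\sim_N$ is exactly $\gamma_Q$. The description quoted from \cite{GB} already supplies the candidate: for a connected $Q$, the congruence $\gamma_Q$ is the orbit decomposition with respect to the derived subgroup $\dis(Q)'$ of the displacement group. Thus the whole statement reduces to verifying that $N:=\dis(Q)'$ meets the hypothesis of Lemma~\ref{char}, i.e. that it is normal in $\aut(Q)$; once this holds, the lemma hands us the congruence property of $\sim_N$, the well-definedness of the induced map, and the homomorphism property all at once.

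First I would record that $\dis(Q)$ is normal in $\aut(Q)$. The essential ingredient is the conjugation identity $\phi R_y\phi^{-1}=R_{\phi(y)}$ for every $\phi\in\aut(Q)$ and $y\in Q$, which is immediate from $\phi(x\ast y)=\phi(x)\ast\phi(y)$, that is $\phi R_y=R_{\phi(y)}\phi$. Conjugating a generator $R_xR_y^{-1}$ of $\dis(Q)$ by $\phi$ then gives $R_{\phi(x)}R_{\phi(y)}^{-1}$, again a generator; hence conjugation by any automorphism permutes the generating set of $\dis(Q)$, so $\dis(Q)$ is invariant under $\aut(Q)$.

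Next I would pass to the derived subgroup. Since $\dis(Q)'=[\dis(Q),\dis(Q)]$ is characteristic in $\dis(Q)$, and a characteristic subgroup of a normal subgroup of $\aut(Q)$ is itself normal in $\aut(Q)$, we conclude $N=\dis(Q)'\trianglelefteq\aut(Q)$. Applying Lemma~\ref{char} with this $N$ and using $\sim_N=\gamma_Q$ from \cite{GB}, the map $\pi_N$ of the lemma is precisely $\pi_\gamma\colon h\mapsto\pi_\gamma(h)$ with $\pi_\gamma(h)([a])=[h(a)]$, which is the claimed group homomorphism. The only point carrying any content is the normality of $\dis(Q)'$ in $\aut(Q)$, and even that dissolves into the one-line conjugation formula above, so I anticipate no real obstacle.
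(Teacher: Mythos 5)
Your proposal is correct and follows exactly the paper's route: the paper also deduces the corollary by citing \cite{GB} to identify $\gamma_Q$ with the orbit decomposition of the derived subgroup of $\dis(Q)$ and then applying Lemma \ref{char}. The only difference is that you spell out the normality of $\dis(Q)'$ in $\aut(Q)$ (via $\phi R_y\phi^{-1}=R_{\phi(y)}$ and the characteristic-subgroup argument), which the paper leaves implicit; this verification is accurate and fills the one step the paper takes for granted.
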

%

Let $A$ be an abelian group and $f\in \aut(A)$. The algebraic structure $(A,*)$, where
$$x*y=f(x)+(1-f)(y)$$	
for every $x,y\in A$ is a quandle, denoted by $\aff(A,f)$ and we call it {\it affine} or {\it Alexander} quandle. Affine quandles are examples of abelian quandles. On the other hand, connected abelian quandles are affine \cite{hsv}.

Affine quandles and modules over the ring of Laurent polynomials $\Lambda=\mathbb{Z}[t,t^{-1}]$ are essentially the same thing. Indeed, every affine quandle $\aff(A,f)$ is a $\Lambda$-module, defined by $t^{\pm 1} \cdot x=f^{\pm 1}(x)$ for every $x\in A$. On the other hand, if $M$ is a $\Lambda$-module, then $\aff(M,t)$ is an affine quandle. Such correspondence between affine quandles and $\Lambda$-modules has been very effective to solve quandle theoretical problems, see for instance \cite{Hou}.

In this setting, the orbits with respect to the action of $\dis(\aff(M,t))$ are the cosets with respect to the submodule $(1-t)M$, indeed
\begin{equation}\label{dis of affine}
R_x R_y^{-1}(z)=(1-t)(x-y)+z,
\end{equation}
for every $x,y\in M$.

The following is a particular case of \cite[Theorem 3.14]{homQ}.
\begin{proposition}\label{hom of affine}
Let $Q_i=\aff(A_i,f_i)$ for $i=1,2$ be connected affine quandles and $M_i$ the associated $\Lambda$-modules for $i=1,2$. The following are equivalent:
\begin{itemize}
\item[(i)] $h:Q_1	\longrightarrow Q_2$ is a quandle homomorphism, such that $h(0)=b$. 
\item[(ii)] $h(x)=g(x)+b$, where $g$ is a morphism of $\Lambda$-modules between $M_1$ and $M_2$.
\end{itemize}	
\end{proposition}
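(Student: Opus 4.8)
The plan is to establish the two implications separately, working throughout in the module picture, where the quandle operation on $Q_i=\aff(A_i,f_i)$ reads $x*_i y = tx+(1-t)y$ (recall that the $\Lambda$-module structure is $t\cdot x=f_i(x)$). The implication (ii)$\Rightarrow$(i) is a direct verification: if $h(x)=g(x)+b$ with $g$ a morphism of $\Lambda$-modules, then
\[
h(x*_1 y)=g(tx+(1-t)y)+b=tg(x)+(1-t)g(y)+b,
\]
while $h(x)*_2 h(y)=t(g(x)+b)+(1-t)(g(y)+b)=tg(x)+(1-t)g(y)+b$, so the two coincide and $h(0)=b$.

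For (i)$\Rightarrow$(ii), I would set $g:=h-b$, so that $g(0)=0$, and translate the homomorphism identity $h(x*_1 y)=h(x)*_2 h(y)$ into the single functional equation
\[
g(tx+(1-t)y)=tg(x)+(1-t)g(y)\qquad\text{for all }x,y\in M_1.
\]
Specializing $y=0$ and using $g(0)=0$ yields $g(tx)=tg(x)$, hence also $g(t^{-1}x)=t^{-1}g(x)$; that is, $g$ commutes with the $\Lambda$-action. Using this, the functional equation rewrites as $g(tx+(1-t)y)=g(tx)+g((1-t)y)$, which says that $g$ is additive whenever one summand lies in $(1-t)M_1$: explicitly, $g(a+c)=g(a)+g(c)$ for every $a\in M_1$ and every $c\in(1-t)M_1$.

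The decisive step, and the place where the connectedness hypothesis enters, is upgrading this partial additivity to genuine additivity. By \eqref{dis of affine} the $\dis(\aff(M_1,t))$-orbits are the cosets of the submodule $(1-t)M_1$, so connectedness of $Q_1$ forces $(1-t)M_1=M_1$. Consequently $c=(1-t)y$ ranges over all of $M_1$, and $g(a+c)=g(a)+g(c)$ holds for all $a,c\in M_1$. Thus $g$ is an additive map that commutes with $t^{\pm1}$, hence a morphism of $\Lambda$-modules, and $h=g+b$ has the asserted form. I expect the last point to be the only subtle one: without connectedness the functional equation controls $g$ only up to $(1-t)M_1$, and it is precisely the identity $(1-t)M_1=M_1$ that removes this ambiguity and delivers full additivity.
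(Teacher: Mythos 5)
Your proof is correct in outline and, unlike the paper, is self-contained: the paper does not prove this proposition at all, but simply quotes it as a special case of \cite[Theorem 3.14]{homQ}. Your direct argument --- reduce to $g=h-b$ with $g(0)=0$, extract $g(tx)=tg(x)$ and the partial additivity $g(a+c)=g(a)+g(c)$ for $c\in(1-t)M_1$ from the functional equation, and then use connectedness via \eqref{dis of affine} to get $(1-t)M_1=M_1$ and hence full additivity --- is exactly the elementary route one would want here, and it correctly isolates connectedness of $Q_1$ as the hypothesis that makes the implication (i)$\Rightarrow$(ii) work. One justification needs repair: you rewrite $(1-t)g(y)$ as $g((1-t)y)$ by appealing to ``$g$ commutes with the $\Lambda$-action,'' but at that stage you have only shown that $g$ commutes with $t^{\pm1}$; commutation with the non-monomial element $1-t$ is an instance of the very additivity you are still trying to prove, so as stated the step is circular. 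The identity $g((1-t)y)=(1-t)g(y)$ does hold, but you should obtain it by specializing $x=0$ (rather than $y=0$) in the functional equation $g(tx+(1-t)y)=tg(x)+(1-t)g(y)$, using $g(0)=0$. With that one-line fix the argument is complete, and it even shows that only $Q_1$ needs to be connected for this direction.
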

%
%
%
%
%

\subsection{Presentation of the Alexander module}
\label{sub62}
 
 In this section we will denote by $\Gamma_n(G)$ the $n$th element of the derived series of a group $G$. Let $Q$ be a quandle, we define the {\it adjoint group} or {\it associated group} of $Q$ as 
\begin{equation}\label{adj}
\mathrm{Adj}(Q)=\langle Q\, :\,  \{xyx^{-1}=y*x, \, x,y\in Q\}\rangle.
\end{equation}  If $Q$ is connected, we can choose $x_1\in Q$ and we get that the adjoint group of $Q$ has the following structure
\begin{equation}\label{structure_adj}
\mathrm{Adj}(Q)\cong \Gamma_1(\mathrm{Adj}(Q))\rtimes \langle x_1	\rangle\cong \Gamma_1(\mathrm{Adj}(Q))\rtimes \Z \;,
\end{equation} 
where we can describe the set of generators of $\mathrm{Adj}(Q)$ as $E(Q)=\setof{(e_x,x_1)}{x\in Q}$ for some elements $e_x\in \Gamma_1(\mathrm{Adj}(Q))$, and in particular we may assume that $e_{x_1}=1$. The semidirect product is defined by the action of conjugation by the element $x_1$, restricted to $\Gamma_1(\mathrm{Adj}(Q))$, where $\mathbb Z$ is the multiplicative group generated by $x_1$. We denote such action by $\rho$.

Let $\mathcal{C}=\textup{Conj}(\setof{(h,x_1)}{h\in \Gamma_1(\mathrm{Adj}(Q))})$, and let $\mathcal{C}_0=\textup{Conj}(E(Q))\subseteq \mathcal{C}$. The map
$$\phi:Q\longrightarrow \mathcal{C}_0,\quad x\mapsto (e_x,x_1)$$
for $x\in Q$ is a surjective quandle homomorphism. It is easy to prove that $\Gamma_1(\mathrm{Adj}(Q))$ is generated by $\setof{(e_y,x_1)(1,x_1)^{-1}=(e_y,1)}{y\in Q}$ and since the quandle $\mathcal{C}_0$ is connected, we have that
\begin{equation}\label{the h_x}
(e_x,1)=(e_x,x_1)(1,x_1)^{-1}=(h_x,1)(1,x_1)(h_x,1)^{-1}(1,x_1)^{-1}=(h_x \rho(h_x)^{-1},1)
\end{equation}
for some $h_x\in \Gamma_1(\mathrm{Adj}(Q))$ for every $x\in Q$. 
 
The abelian group $\Gamma_1(\mathrm{Adj}(Q))/\Gamma_2(\mathrm{Adj}(Q))$ has the structure of a $\Lambda$-module, defined by the map induced by $\rho$ on $\Gamma_1(\mathrm{Adj}(Q))/\Gamma_2(\mathrm{Adj}(Q))$, i.e.
$$t\cdot \xbar{h}=\xbar{\rho(h)}$$
for every $h\in \Gamma_1(\mathrm{Adj}(Q))$, where $\xbar{h}=h+\Gamma_2(\mathrm{Adj}(Q))$ for $h\in  \mathrm{Adj}(Q)$.  Let us denote this $\Lambda $-module by $\mathcal{M}(Q)$, and we call it the {\it Alexander module of $Q$}. The {\it Alexander quandle of $Q$} is the affine quandle, associated to $\mathcal{M}(Q)$, and we denote it by $\mathrm{Alex}(Q)=\aff(\mathcal{M}(Q),t)$.

\begin{lemma}\label{Alex is a factor of QK}
Let $Q$ be a connected quandle. The map
$$\gamma:Q\longrightarrow \mathrm{Alex}(Q),\quad x\mapsto \xbar{e_x },$$
is a surjective quandle homomorphism.
\end{lemma}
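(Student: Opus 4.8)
The plan is to isolate a single identity inside $\mathrm{Adj}(Q)$ that governs how the chosen representatives $e_x$ interact with the quandle operation, and then to read this identity off in the abelianized quotient $\mathcal{M}(Q)=\Gamma_1(\mathrm{Adj}(Q))/\Gamma_2(\mathrm{Adj}(Q))$. Writing each generator in the split form $x=e_x x_1$ with $e_x\in\Gamma_1(\mathrm{Adj}(Q))$, I would substitute into the defining relation $xyx^{-1}=y\ast x$ of \eqref{adj} and expand using $\rho(h)=x_1 h x_1^{-1}$. A direct manipulation gives
\[
e_{y\ast x}=e_x\,\rho(e_y)\,\rho(e_x)^{-1},
\]
and this single structural identity is the only input needed for the homomorphism property.

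Next I would push this identity into $\mathcal{M}(Q)$. Since passing to $\Gamma_1/\Gamma_2$ abelianizes and turns $\rho$ into multiplication by $t$, the identity becomes
\[
\xbar{e_{y\ast x}}=\xbar{e_x}+t\,\xbar{e_y}-t\,\xbar{e_x}=t\,\xbar{e_y}+(1-t)\,\xbar{e_x}.
\]
By the definition of the affine operation on $\mathrm{Alex}(Q)=\aff(\mathcal{M}(Q),t)$, the right-hand side is exactly $\xbar{e_y}\ast\xbar{e_x}$, so $\gamma(y\ast x)=\gamma(y)\ast\gamma(x)$ for all $x,y\in Q$ and $\gamma$ is a quandle homomorphism. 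Being a homomorphism, $\gamma$ then automatically respects $\up$ as well, a fact I will exploit for surjectivity.

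For surjectivity, let $S=\img{\gamma}=\{\xbar{e_x}:x\in Q\}$, a subquandle of $\mathrm{Alex}(Q)$ containing $0=\xbar{e_{x_1}}$ (recall $e_{x_1}=1$). Using the affine formulas $s\ast 0=t\,s$, $s\up 0=t^{-1}s$ and $0\ast s=(1-t)s$, together with $(t^{-1}s)\ast s'=s+(1-t)s'$ and $(ts)\up s'=s-(1-t)(t^{-1}s')$, I would show that $S$ is stable under $t^{\pm1}$ and closed under adding and subtracting elements of $(1-t)S$; since $0\in S$, this forces $S$ to contain the subgroup $\langle (1-t)S\rangle=(1-t)\langle S\rangle$. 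Finally, because $\Gamma_1(\mathrm{Adj}(Q))$ is generated by the $e_x$, the classes $\xbar{e_x}$ generate $\mathcal{M}(Q)$ as an abelian group, i.e. $\langle S\rangle=\mathcal{M}(Q)$; and by \eqref{the h_x} each $\xbar{e_x}=(1-t)\xbar{h_x}$ lies in $(1-t)\mathcal{M}(Q)$, whence $(1-t)\mathcal{M}(Q)=\mathcal{M}(Q)$. Combining, $S\supseteq\langle(1-t)S\rangle=(1-t)\mathcal{M}(Q)=\mathcal{M}(Q)$, so $\gamma$ is onto.

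The homomorphism identity is routine once the split form is in place; the genuine obstacle is surjectivity. The delicate point is that the classes $\xbar{e_x}$ only manifestly generate $\mathcal{M}(Q)$ \emph{as a group}, not as a set, so one cannot conclude $S=\mathcal{M}(Q)$ formally. The resolution is to use that $S$ is a subquandle of an affine quandle in order to upgrade ``generates as a group'' to ``equals as a set'', and this is precisely where connectedness enters, through the equality $(1-t)\mathcal{M}(Q)=\mathcal{M}(Q)$ (equivalently, the connectedness of $\mathrm{Alex}(Q)$).
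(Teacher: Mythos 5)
Your proof is correct. The homomorphism step is the same computation the paper performs implicitly: the paper routes $\gamma$ through the conjugation quandle $\mathcal{C}_0\subseteq \mathcal{C}$ and asserts that $(h,x_1)\mapsto\xbar{h}$ is a quandle homomorphism into $\mathrm{Alex}(Q)$, which, once unwound in the semidirect product \eqref{structure_adj}, is exactly your identity $e_{y\ast x}=e_x\,\rho(e_y)\,\rho(e_x)^{-1}$ and its linearization. For surjectivity both arguments pivot on the same two facts, namely that the image is a connected subquandle of an affine quandle containing $0=\xbar{e_{x_1}}$, and that $(1-t)\mathcal{M}(Q)=\mathcal{M}(Q)$ via \eqref{the h_x}; the difference is in how the first fact is exploited. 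The paper cites \cite[Lemma 3.7]{Principal} to conclude that the image $U$ is affine over a submodule of $\mathcal{M}(Q)$ and then deduces $(1-t)\mathcal{M}(Q)\leq U$, whereas you verify directly that the image is stable under $t^{\pm 1}$ and closed under adding and subtracting elements of $(1-t)S$, so that it contains $\langle(1-t)S\rangle=(1-t)\mathcal{M}(Q)=\mathcal{M}(Q)$. Your version is self-contained (no external structure theorem for connected affine subquandles) at the cost of a few explicit affine-quandle computations, and you correctly locate the role of connectedness: it enters only through \eqref{the h_x}, i.e.\ through the equality $(1-t)\mathcal{M}(Q)=\mathcal{M}(Q)$.
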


\begin{proof}
Let us denote by $\xbar{h}$ the image of $h\in \mathrm{Adj}(Q)$ under the canonical map $\mathrm{Adj}(Q)\longrightarrow \mathrm{Adj}(Q)/\Gamma_2(\mathrm{Adj}(Q))$. The map
$$\mathcal{C}\longrightarrow \mathrm{Alex}(Q), \quad (h,x_1)\mapsto \xbar h$$
is a quandle homomorphism. The image of the restriction of this mapping to $\mathcal{C}_0$ is the subquandle $U=\setof{\xbar {e_x}}{x\in Q}$. The subquandle $U$ is connected and it contains $0$, therefore according to \cite[Lemma 3.7]{Principal}, $U$ is affine over a submodule of $\mathcal{M}(Q)$. 
According to \eqref{the h_x}, the group $\Gamma_1(\mathrm{Adj}(Q))/\Gamma_2(\mathrm{Adj}(Q))$ is generated by
\begin{equation*}\label{generators of the module}
\xbar{e_x} = (1-t)\xbar{h_x}
\end{equation*} for $x\in Q$ and so $(1-t)\mathcal{M}(Q)=\mathcal{M}(Q)$. Since 
$$\xbar{e_{x_1}}*\xbar{e_x}=(1-t)\xbar{e_x}\in U$$
for every $x\in Q$, then $(1-t)\mathcal{M}(Q)=\mathcal{M}(Q)\leq U$. Thus, the mapping $\gamma$ in the statement is onto.
%
\end{proof}


Let $Q=[X\colon R]$ be a presentation of the quandle $Q$. Every relation in $R$ is equivalent to one relation of this form. 
\begin{equation}\label{relation}\
 R_{x_1}^{k_1}	\ldots R_{x_n}^{k_n}(y)=z\tag{R}
\end{equation}
for some $\setof{y,z,x_i}{i=1,\ldots,n}\subseteq X$. 

 The group $\mathrm{Adj}(Q)$ is generated by the set $X$ and the relation \eqref{relation} implies that
\begin{equation}\label{conj_relation}
 e_{x_1}^{k_1}\ldots e_{x_n}^{k_n}e_y e_{x_n}^{-k_n} \ldots e_{x_1}^{-k_1}=e_z\tag{CR}
\end{equation}
holds in $\mathrm{Adj}(Q)$. In particular, $\mathrm{Adj}(Q)$ is generated by the set $X$ modulo the relations obtained from $R$ as in \eqref{conj_relation}. 

Accordingly, in $ \mathrm{Alex}(Q)$ we have that the {\it linearized relation} 
\begin{equation}\label{linearized relation}
(1-t^{k_1})\xbar{e_{x_1}}+t^{k_1}(1-t^{k_2})\xbar{e_{x_2}}+\ldots +t^{k_1+\ldots +k_{n-1}}(1-t^{k_n})\xbar{e_{x_n}}+t^{k_1+\ldots + k_n}\xbar{e_y}=\xbar{e_z}\tag{LR}
\end{equation} 
holds. We can obtain a presentation of $\mathcal M (Q)$ using the corresponding generators and the linearized relations as in \eqref{linearized relation}.

\begin{lemma}\label{presentation of M(Q)}
Let $Q=[X\colon R]$ be a presentation of a connected quandle and let $x_1\in X$. The Alexander module $\mathcal M (Q)$ is the free $\Lambda$-module, generated by the set $\setof{\xbar{e_x}}{x\in X\setminus \{x_1\}}$ modulo the relations, obtained from $R$ as in \eqref{linearized relation}.
\end{lemma}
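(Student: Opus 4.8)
The plan is to produce mutually inverse $\Lambda$-module maps between $\mathcal M(Q)$ and the presented module
$M_0$, the free $\Lambda$-module on $\setof{\xbar{e_x}}{x\in X\setminus\{x_1\}}$ modulo the relations \eqref{linearized relation}. The easy direction is a surjection $\psi\colon M_0\to\mathcal M(Q)$. First I would record that, because $Q$ is generated as a quandle by $X$ and the assignment $x\mapsto\xbar{e_x}$ satisfies $\xbar{e_{y*x}}=(1-t)\xbar{e_x}+t\,\xbar{e_y}$ together with the analogous identity for $\up$, the $\Lambda$-span of $\setof{\xbar{e_x}}{x\in X}$ is closed under the quandle operations and hence equals all of $\mathcal M(Q)$; since $e_{x_1}=1$ gives $\xbar{e_{x_1}}=0$, already $\setof{\xbar{e_x}}{x\in X\setminus\{x_1\}}$ generates $\mathcal M(Q)$ over $\Lambda$. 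As observed in the passage preceding the statement, iterating this linearization along a relation \eqref{relation} yields exactly \eqref{linearized relation}, so those relations hold in $\mathcal M(Q)$; thus $\xbar{e_x}\mapsto\xbar{e_x}$ defines a surjective $\Lambda$-module homomorphism $\psi\colon M_0\to\mathcal M(Q)$.

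The main obstacle is the injectivity of $\psi$, that is, showing that \eqref{linearized relation} accounts for \emph{all} relations, and this is where I would bring in the affine-quandle machinery. Consider the affine quandle $\aff(M_0,t)$. Since the relators $R$ of the presentation $[X\colon R]$ of $Q$ linearize precisely to \eqref{linearized relation}, which hold in $M_0$ by construction, the assignment $x\mapsto\xbar{e_x}\in\aff(M_0,t)$ respects $R$; by the universal property of the quandle presentation it extends to a quandle homomorphism $\Theta\colon Q\to\aff(M_0,t)$ with $\Theta(x_1)=0$.

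Next I would verify that $\aff(M_0,t)$ is connected, so that the Alexander module of the target can be computed. The image $\Theta(Q)$ is a connected subquandle containing $0=\Theta(x_1)$, so by \cite[Lemma 3.7]{Principal} it is affine over a submodule $M_0'\le M_0$ satisfying $(1-t)M_0'=M_0'$; as $M_0'$ contains every generator $\xbar{e_x}$, it must equal $M_0$, which forces $(1-t)M_0=M_0$ and hence connectedness of $\aff(M_0,t)$.

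Finally I would close the loop by functoriality of $\mathcal M$. A direct computation shows that the adjoint group of a connected affine quandle $\aff(M,t)$ is $M\rtimes_t\Z$, whose derived subgroup is the \emph{abelian} group $M$; therefore $\mathcal M(\aff(M,t))\cong M$. Applying $\mathcal M$ to $\Theta$ then yields a $\Lambda$-module homomorphism $\mathcal M(\Theta)\colon\mathcal M(Q)\to M_0$, and tracing $e_x=x\,x_1^{-1}\in\Gamma_1(\mathrm{Adj}(Q))$ through $\mathrm{Adj}(\Theta)$ (under the identification $\mathrm{Adj}(\aff(M_0,t))\cong M_0\rtimes_t\Z$, $a\mapsto(a,1)$) shows $\mathcal M(\Theta)(\xbar{e_x})=\xbar{e_x}$. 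Since $\psi$ and $\mathcal M(\Theta)$ are mutually inverse on the generating set, they are mutually inverse $\Lambda$-module isomorphisms, giving $\mathcal M(Q)\cong M_0$. I expect the connectedness step and the identification $\mathcal M(\aff(M,t))\cong M$ to demand the most care; the remaining verifications are routine in light of Proposition \ref{hom of affine} and Lemma \ref{Alex is a factor of QK}.
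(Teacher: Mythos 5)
Your surjection $\psi\colon M_0\to\mathcal M(Q)$ and the construction of the quandle homomorphism $\Theta\colon Q\to\aff(M_0,t)$ (including the connectedness check via \cite[Lemma 3.7]{Principal}) are fine, but the final step contains a genuine gap. The assertion that ``a direct computation shows that the adjoint group of a connected affine quandle $\aff(M,t)$ is $M\rtimes_t\Z$'' is false in general: by Clauwens's description of the adjoint group of an Alexander quandle, $\Gamma_1(\mathrm{Adj}(\aff(M,t)))$ is a central extension of $M$ with typically nontrivial kernel, hence typically non-abelian. Already for the connected Alexander quandle of order four, $\aff(\Z_2[t]/(t^2+t+1),t)$, the commutator subgroup of the adjoint group is a non-abelian group of order eight (a central extension of $\Z_2\times\Z_2$ by $\Z_2$), not $\Z_2\times\Z_2$. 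What your argument actually requires is the weaker fact that $\Gamma_1(\mathrm{Adj}(\aff(M,t)))/\Gamma_2(\mathrm{Adj}(\aff(M,t)))\cong M$ naturally, i.e.\ $\mathcal M(\aff(M,t))\cong M$. That is true, but it is essentially the lemma you are proving, specialized to an affine quandle: in the paper's development it is the content of Proposition \ref{the Alex} for abelian quandles, which is deduced \emph{from} Lemma \ref{presentation of M(Q)}. So as written, the key injectivity step rests either on a false computation or on a circularity.

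The paper closes this hole without ever passing through $\aff(M_0,t)$: it forms the metabelian group $G=N\rtimes\Z$ (with $N=M_0$ and $1\in\Z$ acting as $t$), observes that the elements $(\xbar{e_x},1)$ and $(0,1)$ of $G$ satisfy the defining conjugation relations \eqref{conj_relation} of $\mathrm{Adj}(Q)$, and hence obtains a group homomorphism $\mathrm{Adj}(Q)\to G$. Since $G$ is metabelian, this homomorphism factors through $\mathrm{Adj}(Q)/\Gamma_2(\mathrm{Adj}(Q))$ and restricts and corestricts to a $\Lambda$-module homomorphism $\mathcal M(Q)\to N$ inverse to $\psi$ on generators. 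Your proof is repaired by replacing your last paragraph with exactly this verification --- either applied to $Q$ directly, as the paper does, or applied to $\aff(M_0,t)$ itself if you want to legitimize the identification $\mathcal M(\aff(M_0,t))\cong M_0$; in the former case the detour through the affine quandle becomes unnecessary.
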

%

\begin{proof}
According to Lemma \ref{Alex is a factor of QK}, the map $x\mapsto \xbar{e_x}$ is a surjective quandle homomorphism, so $\mathrm{Alex}(Q)$ is generated by $\setof{\xbar{e_x}}{x\in X}$, and so $\mathcal{M}(Q)$ is generated as a module by $\setof{\xbar{e_x}}{x\in X\setminus\{x_1\}}$, and the relations as in \eqref{linearized relations statement} hold. 
We need to show that this set of relations provides a presentation of $\mathcal M (Q)$.

Let $N$ be the $\Lambda$-module, generated by the set $X\setminus \{x_1\}$ and satisfying the relations as in \eqref{linearized relations statement}, and define the group $G=N\rtimes \Z$, where the action by $(0,1)$ is the action of $t$ on $N$. It is easy to verify that the generators of $G$ are $\setof{(x,1)}{x\in X\setminus \{x_1\}}\cup \{(0,1)\}$ and such generators satisfy the same relations that define the group $\mathrm{Adj}(Q)$. Hence we have a canonical map from $\mathrm{Adj}(Q)$ onto $G$ and the following commutative diagram
\begin{equation*}
\xymatrixcolsep{63pt}\xymatrixrowsep{30pt}\xymatrix{ \mathrm{Adj}(Q)\ar[r]
\ar[d] &  G  \\
\mathrm{Adj}(Q)/\Gamma_2(\mathrm{Adj}(Q))\ar[ur]^h  &  },
\end{equation*} 
where the map $h$ restricts and corestricts to $\mathcal M (Q)$ and $N$. Hence if the generators of $\mathcal M (Q)$ satisfy a relation, then also the generators of $N$ do and therefore such relation is a consequence of the set of relations of the relation obtained from $R$ as in \eqref{linearized relation}. 
\end{proof}

In the following Proposition, we show that $\Alex(Q)$ satisfies the same universal property as $Q/\gamma_Q$. This result was implicit in the main result of \cite{Inoue}.

\begin{proposition}\label{the Alex}
Let $Q$ be a connected quandle. Then $\mathrm{Alex}(Q)\cong Q/\gamma_{Q}$.
\end{proposition}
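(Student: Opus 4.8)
The plan is to exhibit a pair of mutually inverse maps between $Q/\gamma_Q$ and $\mathrm{Alex}(Q)$, realised as morphisms of connected affine quandles, so that the isomorphism is forced once both objects are placed in the affine/$\Lambda$-module setting.

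First I would produce the map $Q/\gamma_Q\to\mathrm{Alex}(Q)$ from the universal property. Since $\mathrm{Alex}(Q)=\aff(\mathcal{M}(Q),t)$ is affine, it is in particular abelian, and by Lemma \ref{Alex is a factor of QK} the assignment $\gamma\colon Q\to\mathrm{Alex}(Q)$, $x\mapsto\xbar{e_x}$, is a surjective quandle homomorphism. The universal property of $Q/\gamma_Q$ recalled above then factors $\gamma$ as $\gamma=\bar\gamma\circ q$, where $q\colon Q\to Q/\gamma_Q$ is the canonical projection and $\bar\gamma\colon Q/\gamma_Q\to\mathrm{Alex}(Q)$ is a surjective quandle homomorphism. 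Next I would put both sides in module form. The quotient $Q/\gamma_Q$ is connected, being a surjective image of the connected quandle $Q$, and abelian by the definition of $\gamma_Q$; hence it is affine by \cite{hsv}, and I write $Q/\gamma_Q=\aff(N,t)$ for a $\Lambda$-module $N$, choosing the basepoint $0=q(x_1)$. On the other side $\mathrm{Alex}(Q)$ is connected, because the proof of Lemma \ref{Alex is a factor of QK} establishes $(1-t)\mathcal{M}(Q)=\mathcal{M}(Q)$, which by \eqref{dis of affine} is exactly transitivity of the displacement action. Proposition \ref{hom of affine} then applies to $\bar\gamma$: since $\bar\gamma(0)=\bar\gamma(q(x_1))=\xbar{e_{x_1}}=0$, the translation part vanishes and $\bar\gamma$ is a surjective morphism of $\Lambda$-modules $g\colon N\to\mathcal{M}(Q)$ with $g(q(x))=\xbar{e_x}$.

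To build the inverse I would use the explicit presentation of the module. Fix a presentation $Q=[X\colon R]$ with $x_1\in X$. Because $q\colon Q\to\aff(N,t)$ is a quandle homomorphism and $u\ast v=tu+(1-t)v$ in an affine quandle, applying $q$ to a defining relation of the form \eqref{relation} produces exactly the linearized identity \eqref{linearized relation} among the images $q(x)$; the coefficients telescope to an affine combination summing to $1$, so this identity is insensitive to the translation by $q(x_1)$. Consequently the assignment $\xbar{e_x}\mapsto q(x)-q(x_1)$ respects every relation in the presentation of $\mathcal{M}(Q)$ given by Lemma \ref{presentation of M(Q)}, and therefore defines a $\Lambda$-module homomorphism $\delta\colon\mathcal{M}(Q)\to N$, surjective because the classes $q(x)$ generate $N$. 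Evaluating on generators yields $g\circ\delta(\xbar{e_x})=g(q(x)-q(x_1))=\xbar{e_x}$ and $\delta\circ g(q(x)-q(x_1))=\delta(\xbar{e_x})=q(x)-q(x_1)$, so $g$ and $\delta$ are mutually inverse. Hence $\bar\gamma$ coincides with the module isomorphism $g$ and is an isomorphism of affine quandles, giving $\mathrm{Alex}(Q)\cong Q/\gamma_Q$.

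The step I expect to require the most care is the well-definedness of $\delta$: one must verify that each linearized relation \eqref{linearized relation}, which by Lemma \ref{presentation of M(Q)} defines $\mathcal{M}(Q)$, actually holds among the images $q(x)-q(x_1)$ in $N$. This hinges on the computation that a quandle homomorphism into an affine quandle linearizes each relation \eqref{relation} into precisely \eqref{linearized relation}, together with the twin connectedness checks that make Proposition \ref{hom of affine} available on both sides. Once these are secured, the two surjections $g$ and $\delta$ are seen to be mutually inverse by elementary bookkeeping on the generators, and no further input is needed.
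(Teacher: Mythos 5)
Your proposal is correct and follows essentially the same route as the paper: both arguments hinge on Lemma \ref{Alex is a factor of QK}, on Proposition \ref{hom of affine}, and on the key observation that a quandle homomorphism into an affine quandle turns each defining relation \eqref{relation} into the linearized relation \eqref{linearized relation}, so that Lemma \ref{presentation of M(Q)} yields a well-defined $\Lambda$-module map out of $\mathcal{M}(Q)$. The only difference is packaging --- the paper verifies the universal property of $Q/\gamma_Q$ for $\mathrm{Alex}(Q)$ against an arbitrary abelian target $R$, whereas you specialize to $R=Q/\gamma_Q$ and exhibit the two maps as mutually inverse on generators.
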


\begin{proof}
In order to prove that $\mathrm{Alex}(Q)$ and $Q/\gamma_{Q}$ are isomorphic, it is enough to show that they have the same universal property. Let $R$ be an abelian quandle and let $h:Q	\longrightarrow R$ be a surjective quandle homomorphism. First, let us assume that $h(x_1)=0$. 

If $Q$ is generated by $X$, then according to Lemma \ref{presentation of M(Q)} the Alexander module $\mathcal M (Q)$ is generated by $\setof{e_x+\Gamma_2(\mathrm{Adj}(Q))}{x\in X\setminus\{x_1\}}$ modulo the relations as in \eqref{linearized relations statement}.

The quandle $R$ is a connected affine quandle, and so $R$ has a structure of a $\Lambda$-module, generated by $\setof{h(x)}{x\in X\setminus \{x_1\}}$. So, the generators of the $\Lambda$-module, associated to $R$, satisfy the same relations as the set of generators of $\mathcal M (Q)$. Therefore, there exists a module homomorphism $\hat{h}:\mathcal{M}(Q)\longrightarrow R$, mapping $e_x+\Gamma_2(\mathrm{Adj}(Q)$ to $ h(x)$ for $x\in X\setminus \{x_1\}$. Thus, the following diagram is commutative:
\begin{equation}
\xymatrixcolsep{63pt}\xymatrixrowsep{30pt}\xymatrix{ Q\ar[r]^{\gamma}
\ar[dr]^{ h} &  \mathrm{Alex}(Q) \ar[d]^{\widehat h} \\
& R }.\label{gamma}
\end{equation} 
By Proposition \ref{hom of affine}, $\widehat{h}$ is also a quandle homomorphism and so the map $h$ factors through $\mathrm{Alex}(Q)$. 

If $h(x_1)=a$, we can consider the map $h_a(x)=h(x)-a$ with the property $h_a(x_1)=0$. Therefore $h(x)=h_a(x)+a=\widehat{h_a}\gamma(x)+a$, and so again $h$ factors through $\gamma$.
\end{proof}
%



According to \cite[Corollary 3.3]{FR}, the adjoint group of $Q(K)$, the fundamental quandle of a knot $K$ in the 3-sphere (or more generally in a homotopy 3-sphere) is the fundamental group of $K$. So the module $\mathcal M(Q(K))$ is the classical Alexander module of $K$ (see \cite{BZ} for further reference) and is a knot invariant that will be denoted simply by $\mathcal M(K)$. Analogously, we set $\mathrm{Alex}(Q(K))=\mathrm{Alex}(K)$ and call it the \textit{Alexander quandle} of $K$.

\begin{example}\label{example of a knot}
Let $K$ be a knot in the 3-sphere, and let $Q(K)=[X\colon R]$ be a presentation of the knot quandle as in Theorem \ref{th0}. Then $\mathcal{M}(Q)$ is generated by $\{\xbar{e_{x}}\colon x\in X\setminus \{x_1\}\}$ modulo the relations
\begin{equation}\label{linearized relations statement}
(1-t)\xbar{e_y} +t \xbar{e_x}=\xbar{e_z}
\end{equation}
for each crossing relation $x*y=z$ in $R$, where $x,y,z\in X$. We denote this particular presentation matrix of the Alexander module of a knot by $V_K(t)$. Note that $V_{K}(t)$ depends on the choice of a particular knot diagram and on the labeling of the arcs in this diagram. 
\end{example}

In the case of satellite knots, we recover a classical result by \cite{LM} (see also \cite[Proposition 8.23]{BZ}).

\begin{proposition}\label{representation of the Alex quandle}
Let $S$ be a satellite knot with pattern $P$, companion $C$ and winding number $w$. Then a presentation matrix of $\mathcal{M}(S)$ is
\begin{equation}\label{matrix for pres}
A(t)=\begin{bmatrix}
V_P(t) & 0 \\
0 & V_C(t^w)
\end{bmatrix}.
\end{equation}
\end{proposition}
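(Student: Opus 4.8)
The plan is to read off a presentation of $\pi_1(S)=\mathrm{Adj}(Q(S))$ from Corollary \ref{cor_pres} and Remark \ref{standard meridians and longitudes}, and then to linearize it as in Lemma \ref{presentation of M(Q)} and Example \ref{example of a knot} (i.e. compute $\Gamma_1/\Gamma_2$ of the adjoint group by Fox calculus) to obtain a presentation matrix of $\mathcal{M}(S)=\mathcal{M}(Q(S))$ over $\Lambda$. By \cite[Corollary 3.3]{FR} we have $\mathrm{Adj}(Q(S))=\pi_1(S)$, so the generators are the pattern arcs $x_1,\dots,x_n$ (the meridians of $S$), the solid--torus longitude $a_1=\lambda_U$, and the companion arcs $y_1,\dots,y_m$, with defining relations the pattern crossings $C_1(D_L)$, the relations $(x_{d+i})^{a_1}=x_i$, the torus relation $[a_1,\mu_U]=1$ with $\mu_U=\epsilon(x_1)^{\delta_1}\cdots\epsilon(x_d)^{\delta_d}$, the companion relations $R_C$, and the two gluing relations $i_1(\mu_V)=i_2(\mu_U)$, $i_1(\lambda_V)=i_2(\lambda_U)=a_1$. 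The decisive preliminary step is to record the abelianization degrees $\phi\colon\pi_1(S)\to\mathbb{Z}=\langle t\rangle$: every $x_i$ is a meridian of $S$, so $x_i\mapsto t$; the preferred longitude $a_1=\lambda_U=\lambda_V$ is null--homologous, so $a_1\mapsto t^0$; and since $S$ is homologous to $wC$ inside $V$, each companion meridian $y_j$ links $S$ exactly $w$ times, so $y_j\mapsto t^w$. This last assignment is the entire source of the substitution $t\mapsto t^w$ in the companion block.

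Next I would linearize each relation as in \eqref{linearized relation}. The crossings $C_1(D_L)$ give precisely the crossing rows of $V_P(t)$. Using $a_1\mapsto t^0$, the relations $(x_{d+i})^{a_1}=x_i$ linearize to $\overline{e}_{x_{d+i}}-\overline{e}_{x_i}=(1-t)\overline{a}_1$, i.e. the identification rows of $V_P(t)$ together with a term in the auxiliary generator $\overline{a}_1$. The torus relation linearizes to $(1-t^w)\overline{a}_1=0$. Because all companion meridians have degree $t^w$, the relations $R_C$ linearize exactly to the rows of $V_C$ with $t$ replaced by $t^w$: concretely these rows present the module induced from $\mathcal{M}(C)$ along $\mathbb{Z}[s^{\pm1}]\hookrightarrow\Lambda$, $s\mapsto t^w$, whose presentation matrix is $V_C(t^w)$. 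Finally the gluing relations give one relation expressing $\overline{a}_1$ as a $\Lambda$--combination $\overline{\ell}$ of the $\overline{e}_{y_j}$ (from $\lambda_V=\lambda_U$) and one identifying a companion generator with the pattern class $\overline{\mu}_U$ (from $\mu_V=\mu_U$). Assembling these rows yields a block matrix whose diagonal blocks are already $V_P(t)$ and $V_C(t^w)$, but which carries off--diagonal coupling from the $(1-t)\overline{a}_1$ term and from the meridian--gluing row.

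The final step is to remove this coupling by admissible operations on the presentation matrix (row and column operations, deletion of redundant rows, elimination of a generator against a unimodular entry). I would first use $\lambda_V=\lambda_U$ to eliminate $\overline{a}_1=\overline{\ell}$. After this substitution the torus row becomes $(1-t^w)\overline{\ell}=0$, which is exactly the linearization of the companion relation $[\mu_C,\lambda_C]=1$ and hence already lies in the row span of $V_C(t^w)$, so it may be discarded. The surviving coupling then consists of the $(1-t)\overline{\ell}$ term in the pattern identification rows and the meridian--gluing row $\overline{e}_{y_{j_0}}=\overline{\mu}_U$, and the heart of the argument is to show that these are cleared against the companion block without disturbing either diagonal block, leaving the matrix equivalent to $A(t)$.

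I expect this last decoupling to be the main obstacle. It is the algebraic shadow of the Mayer--Vietoris decomposition of the infinite cyclic cover of $E_S$ along $\partial V$: the degree--$w$ companion meridian forces the restricted cover of the companion exterior to split into $w$ copies of the infinite cyclic cover of $C$, which is precisely why its homology is the induced module presented by $V_C(t^w)$, while the null--homologous longitude makes the interface torus contribute only relations that are already consequences of $[\mu_C,\lambda_C]=1$. Making this precise at the level of the presentation matrix --- verifying that the gluing rows and the $(1-t)\overline{\ell}$ term are absorbed by the companion relations and by column changes among the pattern generators --- is the delicate bookkeeping the proof must carry out, and it recovers the classical factorization of \cite{LM}.
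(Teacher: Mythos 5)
Your setup is the same as the paper's (the presentation of Corollary \ref{cor_pres}, the degree map $x_i\mapsto t$, $y_j\mapsto t^w$, $a_1\mapsto t^0$, and linearization as in \eqref{linearized relation}, which is indeed how the companion block becomes $V_C(t^w)$), but the proposal stops exactly where the proof has to do its real work: you state that clearing the $(1-t)\overline{a}_1$ coupling in the identification rows and the meridian--gluing row against the companion block is ``the delicate bookkeeping the proof must carry out,'' and you do not carry it out. This is a genuine gap, and the observation you do make --- that $a_1=\lambda_V$ is null-homologous, i.e.\ $\lambda_V\in\Gamma_1(\pi_1(C))$ --- is not strong enough to close it: it still leaves $\overline{a}_1$ equal to a potentially nonzero class $\overline{\ell}$ in the companion generators, hence a genuinely nontrivial off-diagonal block. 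The missing input, which the paper invokes via \cite[Proposition 3.12]{BZ}, is that the preferred longitude lies in the \emph{second} derived subgroup, $\lambda_V\in\Gamma_2(\pi_1(C))$ (it bounds a Seifert surface that lifts to the infinite cyclic cover). Hence $a_1\in\Gamma_2(\pi_1(S))$, its image in $\mathcal{M}(S)$ and in $\aut(\mathrm{Alex}(S))$ is trivial, the relations $(x_{d+i})^{a_1}=x_i$ collapse to $x_{d+i}=x_i$ with no $\overline{a}_1$ term at all, and the torus relation becomes vacuous. This single fact is what makes the pattern block exactly $V_P(t)$; without it your ``clearing against the companion block'' is an unproved assertion (even though it is true, precisely because $\overline{\ell}=0$ modulo the companion rows).

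The second coupling you defer, the meridian identification $\overline{e}_{y_1}=\overline{\mu}_U=\widetilde{x}$, is also not handled by the row operations you gesture at: eliminating $\overline{e}_{y_1}$ by substitution would reinsert the pattern class $\widetilde{x}$ into every companion row containing $y_1$. The paper instead performs a global affine change of variables $z\mapsto z-\widetilde{x}$ on the generators, which preserves the linearized relations and converts $\widehat{y}_1=\widetilde{x}$ into the normalization $\widehat{y}_1=0$ already built into $V_C$; it also verifies, via the auxiliary semidirect products $N\rtimes\mathbb{Z}$ and the map onto $\pi_1(S)/\Gamma_2(\pi_1(S))$, that the listed relations really do present $\mathcal{M}(S)$ and not a proper quotient --- a point your proposal does not address either. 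You would need to supply both of these steps for the argument to be complete.
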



\begin{proof}
A presentation of the knot quandle of $S$ is provided in Corollary \ref{cor_pres}:
$$Q(S)=\left [S_{P1}, S_{P2}\cup S_{C}\, \colon \, R_{P1},R_{P2}\cup R_{C}\cup \{i_{1}(\mu _{V})=i_{2}(\mu _{U}), i_{1}(\lambda _{V})=i_{2}(\lambda _{U})\}\right ]\;.$$
Let us denote by $x_{i}$ (respectively $y_{i}$) the generators in $S_{P1}$ (respectively $S_{C}$), and let $a_1$ be the only remaining generator of $S_{P2}$, so that the labeling in $$S_{P1}=\{x_1,\ldots, x_d, x_{d+1},\ldots , x_{2d},x_{2d+1},\ldots x_n\}$$ agrees with Figure \ref{fig2}. According to Remark \ref{standard meridians and longitudes}, we can choose $i_{2}(\lambda _{U})=a_1$, $i_{1}(\mu _{V})=y_1$ and $i_{2}(\mu _{U})=\epsilon(x_{i_1})^{\delta_1}\ldots \epsilon(x_{i_n})^{\delta_n}$ with $w=\sum_j \delta_j$. By Proposition \ref{prop1}, the fundamental group of $S$ is isomorphic to the group $G_{P,C}= \pi_1(C)*_{(\pi_1(\partial V), i_1,i_2)}\pi_1(U-P)$.

By Remark \ref{remark on generators}, the elements of $Q(S)$ are of the form $\setof{g(x)}{x\in S_{P1}, \, g\in G_{P,C}}$. Using the notation defined in Proposition \ref{presentation of M(Q)}, we have that the elements of the module $\mathcal{M}(S)$ are of the form $\xbar{e_{g(x)}}$ (recall that we have $\xbar{e_{x_1}}=0$). Using Corollary \ref{homo}, we have that $\xbar{e_{g(x)}}=\pi_\gamma(g)(\xbar{e_x})$.

The relations in the presentation of the group $G_{P,C}$ imply that the image of the action 
$$\rho: G_{P,C}\longrightarrow \aut{(Q(S))}$$
lies in the inner automorphism group of $Q(S)$ and so the image of $\pi_\gamma$ lies in the inner automorphism group of $\textup{Alex}(S)$. 
%
The inner automorphism group of $\textup{Alex}(S)$ is isomorphic to $\dis(\textup{Alex}(S))\rtimes \Z$ and so it is solvable of length $2$ (see \cite[Proposition 2.7]{Principal} and note that its displacement group is abelian). The right multiplication mappings in $\textup{Alex}(S)$ are of the form $((1-t)x,t)$ ( in particular $R_{\xbar{e_{x_1}}}=(0,t)$). 
%

According to \cite[Proposition 3.12]{BZ}, $\lambda_V\in \Gamma_2(\pi_1(C))$ and so $i_{1}(\lambda _{V})=a_1\in \Gamma_2(\pi_1(S))$. Thus $\pi_\gamma(a_1)=1$ and so the relations in $R_{P1}$ of the form $x_{d+i}^{a_1}=x_i$ read as $x_{d+i}=x_i$ for $i=1\ldots d$.  So we can consider just the subset $S_{P1}'$, obtained from $S_{P1}$ by omitting $x_{d+i}$ for $i=1,\ldots,n$.

The relation $y_1=\epsilon(x_{i_1})^{\delta_1}\ldots \epsilon(x_{i_n})^{\delta_n}$ with $w=\sum_j \delta_j$ implies that $\pi_\gamma(y_1)=(\widehat{y}_1,t^w)$, where $\widehat{y}_1=\widetilde{x}$ is an element in the submodule, generated by $\setof{\xbar{e_x}}{S_{P1}'\setminus \{x_1\}}$. The generators in $S_C$ are all conjugate. So $\pi_\gamma(y_i)=(\widehat{y}_i,t^w)$ for some $\widehat{y}_i\in \Gamma_1(\pi_1(S))/\Gamma_2(\pi_1(S))$.  Accordingly, we have that $\mathcal M(S)$ is generated as a module by the set $Z=\setof{\xbar{e_z}}{z\in S_{P_1}'\cup S_C\setminus\{x_1\}}$.

The relations in $R_{C}$ are of the form $y_i y_j y_i^{-1}=y_k$ and so 
$$\pi_\gamma	(y_i y_j y_i^{-1})=((1-t^w)\widehat{y}_i + t^w \widehat{y}_j,t^w)=(\widehat{y}_k,t^w).$$

Hence we have that the generators of the module $\mathcal M(S)$ satisfy the linearized relations obtained from $R_{P1}$ as in \eqref{linearized relation}, that are exactly the same relations collected in $V_P(t)$, the linearized relation $LR_C$, coming from $R_C$, and the relation $\widehat{y_1}=\widetilde{x}$. 

Let $N$ be a $\Lambda$-module, presented as $\langle Z \, \colon \,  V_P(t)\,,\, LR_C,\, \widehat{y_1}=\widetilde{x}\rangle$. Then we have a canonical surjective map
$$N\longrightarrow \mathcal{M}(S)\;,$$
identifying the generators. 

The group $H=N\rtimes \Z$, where the action of $1\in \Z$ is by multiplication by $t$, is generated by $\setof{(z,1),(0,1)}{z\in Z}$ and such generators satisfy the same relations as the generators of the group $\pi_1(S)/\Gamma_2(\pi_1(S))$ (see Proposition \ref{prop1} and note that $a_1\in\Gamma_2(\pi _{1}(S)$ and $x_{d+i}=a_1x_ia_1^{-1}$). Hence there exists a surjective group homomorphism
$$\pi_1(S)/\Gamma_2(\pi_1(S))\longrightarrow H$$
that identifies the generators. Such homomorphism induces a $\Lambda$-module homomorphism between $\mathcal {M} (S)$ and $N$. Therefore $\mathcal {M} (S)$ and $N$ are isomorphic. 

Let us denote by $V$ the module with presentation matrix \eqref{matrix for pres}. We have 
\begin{eqnarray*}
V=\langle Z\setminus\{y_1\}, \, | \,  V_P(t), \, V_C(t^w)\rangle =\langle Z, \, | \,  V_P(t), \, LR_C,\, y_1\rangle.
\end{eqnarray*}
The mapping
$$N \longrightarrow V,\quad z\mapsto z-\widetilde{x}$$
for $z\in Z$ preserves the relations. Therefore it can be extended to a bijective morphism of $\Lambda$-modules (clearly the inverse is defined by $z\mapsto z+\tilde x$). Then $\mathcal{M}(S)$ is presented by the matrix in the statement.
\end{proof}

\subsection{Knot colorings by affine quandles} \label{sub63}

Let $K$ be a knot and $Q$ be a quandle. Denote by $Arc(K)$ the set of arcs in a chosen diagram for $K$. A {\it coloring} of $K$ by $Q$ is a mapping $$c:Arc(K)\longrightarrow Q$$
which respects the crossing relations as in Figure \ref{fig:crossingQ}. A coloring is {\it trivial} if $|Im(c)|=1$. We say that $K$ is {\it colorable} by the quandle $Q$, if $K$ admits a non-trivial coloring by $Q$. Colorings by $Q$ naturally correspond to quandle homomorphisms from the knot quandle $Q(K)$ to $Q$. Since $Q(K)$ is a knot invariant, colorability of a knot does not depend on the choice of a particular knot diagram. Note that $Q(K)$ is a connected quandle and then so is the quandle generated by $Im(c)$. If $Q$ is an affine quandle, then every quandle coloring by $Q$ factors through the map $\mathrm{Alex}(K)\to Q$.

Before introducing the results, we recall the definition of Alexander polynomials of a knot according to \cite{BZ}. 

In a commutative ring $R$, the greatest common divisor (GCD) of a set of elements is defined up to the multiplication by an invertible element, hence in $\Lambda$ it is defined up to the multiplication by a power of $\pm t$. Let $A(t)$ be a $m\times r$ presentation matrix for the Alexander module $\mathcal M(K)$ of a knot $K\in S^3$. We set 
$$\Delta_{n}(K)=\begin{cases}0, \, \text{ if } 0 < m<r-n+1,\\ 
\textup{GCD}\{(r-n+1)-\text{minors of } A(t)\}, \, \text{ if } 0<r-n+1\leq m, \\ 1, \,\text {otherwise,} \end{cases}$$
 and we call $\Delta_{n}(K)$ the $n$-th Alexander polynomial of $K$ (the equality has to be read up to the equivalence mentioned above). The Alexander polynomials are independent of the presentation matrix chosen to compute them - they are invariants of the knot, and $\Delta_1(K)$ is simply called the Alexander polynomial. 


 
The ring $\Lambda$ is a principal ideal domain (PID) and $\mathcal{M}(K)$ is a finitely generated module over $\Lambda$. Hence, using the structure theory of finitely generated modules over a PID (see \cite{BZ}), we can show that the Smith normal form of the presentation matrix of $\mathcal{M}(K)$ is the diagonal matrix with entries $\frac{\Delta_i(K)}{\Delta_{i+1}(K)}$ for $i=1\ldots n-1$, where $n$ is the smallest integer, such that $\Delta_n(K)$ is invertible in $\Lambda$. Accordingly, the 
module structure of $\mathcal{M}(K)$ is the following:
\begin{equation}\label{structure of M}
\mathcal{M}(K)= \frac{\Lambda}{(\Delta_{n-1}(K))}\oplus \frac{\Lambda}{\left(\frac{\Delta_{n-2}(K)}{\Delta_{n-1}(K)}\right)} \oplus \ldots \oplus \frac{\Lambda}{\left(\frac{\Delta_1(K)}{\Delta_2(K)}\right)}.
\end{equation}

In the following, we are using the structure of the Alexander module given in \eqref{structure of M} to show an alternative proof of the main results of \cite{BAE} in the case of knots.

\begin{proposition}\label{coloring by affine}
Let $K$ be a knot. The following are equivalent:

\begin{itemize}
\item[(i)] $K$ is not colorable by any affine quandle.
\item[(ii)] $\Gamma_1(\pi_1(K))$ is perfect.
\item[(iii)] $K$ is not colorable by any solvable quandle.
\item[(iv)] $\Delta_1(K)= 1$.
\end{itemize}

\end{proposition}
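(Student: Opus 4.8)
The plan is to prove the four equivalences by establishing a cycle of implications, using the structure of the Alexander module $\mathcal{M}(K)$ recorded in \eqref{structure of M} as the central computational tool and the Alexander polynomial $\Delta_1(K)$ as the bridge between the algebraic and combinatorial conditions. The natural order is to prove $(iv)\Leftrightarrow(i)$ first, then handle $(i)\Leftrightarrow(iii)$ (where the forward direction is essentially trivial), and finally connect these to $(ii)$ via the adjoint group / abelianization machinery of Subsection \ref{sub62}.

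First I would prove $(iv)\Rightarrow(i)$. If $\Delta_1(K)=1$, then $\Delta_n(K)$ is invertible already at $n=1$, so by \eqref{structure of M} the module $\mathcal{M}(K)$ is trivial, i.e. $\mathcal{M}(K)=0$. Since every coloring of $K$ by an affine quandle $Q=\aff(A,f)$ factors through the map $\mathrm{Alex}(K)\to Q$ (as noted at the start of Subsection \ref{sub63}), and $\mathrm{Alex}(K)=\aff(\mathcal{M}(K),t)$ is a one-point quandle when $\mathcal{M}(K)=0$, every such coloring is trivial. For the converse $(i)\Rightarrow(iv)$ I would argue contrapositively: if $\Delta_1(K)\neq 1$ then by \eqref{structure of M} the module $\mathcal{M}(K)$ is nonzero, so $\mathrm{Alex}(K)$ is an affine quandle with more than one element, and the surjection $\gamma\colon Q(K)\to\mathrm{Alex}(K)$ of Lemma \ref{Alex is a factor of QK} is itself a nontrivial coloring. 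Thus $(i)\Leftrightarrow(iv)$.

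Next I would address $(i)\Leftrightarrow(iii)$. The implication $(iii)\Rightarrow(i)$ is immediate, since affine quandles are abelian and hence solvable, so non-colorability by all solvable quandles forces non-colorability by affine ones. For $(i)\Rightarrow(iii)$, I would use that $Q(K)$ is connected, so the image of any coloring generates a connected quandle; if that image is solvable and nontrivial, its smallest nonabelian-to-abelian quotient $Q'/\gamma_{Q'}$ is a nontrivial connected abelian quandle, which is affine by \cite{hsv}, contradicting $(i)$. Here the key point is that a nontrivial solvable quandle admits a nontrivial abelian (hence affine) quotient, which lets us reduce colorability by solvable quandles to colorability by affine ones.

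Finally, the connection to $(ii)$ is where the adjoint group enters, and I expect this to be the main obstacle. By \cite[Corollary 3.3]{FR}, $\mathrm{Adj}(Q(K))=\pi_1(K)$, and $\mathcal{M}(K)=\Gamma_1(\pi_1(K))/\Gamma_2(\pi_1(K))$ by the construction in Subsection \ref{sub62}. The statement that $\Gamma_1(\pi_1(K))$ is perfect means precisely $\Gamma_2(\pi_1(K))=\Gamma_1(\pi_1(K))$, i.e. $\mathcal{M}(K)=0$, which is exactly the triviality of $\mathrm{Alex}(K)$. So $(ii)\Leftrightarrow(i)$ should follow once we identify $\mathcal{M}(K)$ with the abelianization of the commutator subgroup of the knot group. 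The delicate step will be making this identification rigorous and reconciling the $\Lambda$-module structure (needed to interpret $\mathcal{M}(K)=0$ as a statement about $\Delta_1$) with the bare abelian-group statement of perfectness, but all the required ingredients are supplied by the presentation of $\mathcal{M}(Q)$ in Lemma \ref{presentation of M(Q)} together with the structural decomposition $\mathrm{Adj}(Q)\cong\Gamma_1(\mathrm{Adj}(Q))\rtimes\Z$ in \eqref{structure_adj}.
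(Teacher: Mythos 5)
Your proposal is correct and uses the same basic ingredients as the paper (the structure \eqref{structure of M} of $\mathcal M(K)$, the surjection $\gamma\colon Q(K)\to\mathrm{Alex}(K)$ from Lemma \ref{Alex is a factor of QK}, and the identification $\mathcal M(K)=\Gamma_1(\pi_1(K))/\Gamma_2(\pi_1(K))$ coming from $\mathrm{Adj}(Q(K))\cong\pi_1(K)$), but it routes the solvable condition (iii) differently. The paper proves $(ii)\Rightarrow(iii)$: perfectness of $\Gamma_1(\pi_1(K))$ is pushed forward along $\pi_1(K)\to\inn(Q(K))$, $x\mapsto R_x$, to show $\dis(Q(K))$ is perfect, hence so is $\dis(Q)$ for the image of any coloring, and then $Q$ is not solvable by \cite[Lemma 6.1]{CP}. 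You instead prove the contrapositive of $(i)\Rightarrow(iii)$ by extracting from a nontrivial connected solvable image $Q'$ the nontrivial abelian quotient $Q'/\gamma_{Q'}$, which is affine by \cite{hsv}, and composing. Both arguments lean on \cite{CP}; yours requires that subquandles of solvable quandles are solvable and that a nontrivial solvable quandle satisfies $\gamma_{Q'}\neq 1_{Q'}$ (the derived series of congruences must strictly descend at the first step), and you should cite these rather than treat them as evident --- quandles are not congruence modular, so these closure properties are genuinely theorems of \cite{CP} and not generic universal algebra. What each approach buys: yours stays entirely within the $\gamma_Q$/abelianization formalism the paper sets up and makes the reduction ``solvable $\leadsto$ affine'' explicit, while the paper's is shorter given the external lemma on perfect displacement groups. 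The remaining implications --- $(iv)\Leftrightarrow(i)$ via triviality of $\mathcal M(K)$, $(iii)\Rightarrow(i)$ since affine quandles are abelian, and $(i)\Leftrightarrow(ii)$ via $\mathcal M(K)=0$ --- coincide with the paper's up to the order in which they are chained, and the ``delicate step'' you flag at the end is in fact immediate from the definition of $\mathcal M(Q)$ in Subsection \ref{sub62} together with \cite[Corollary 3.3]{FR}.
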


\begin{proof}

(i) $\Rightarrow$ (ii) Since $K$ admits a quandle homomorphism $Q(K)\to Alex(K)$, (i) implies that $\mathrm{Alex}(K)$ is trivial. Therefore $\Gamma_1(\pi_1(K))$ is perfect.

(ii) $\Rightarrow$ (iii) If $\Gamma_1(\pi_1(K))$ is perfect, then $\dis(Q(K))$ is also perfect, since the map
$$\pi_1(K)\longrightarrow \inn(Q(K)),\quad x\mapsto R_x$$
can be extended to a surjective group homomorphism, which restricts and co-restricts to $\Gamma_1(\pi_1(K))$ and $\dis(Q(K))$. Assume that $K$ is colorable by a quandle $Q$, i.e. there exists a quandle homomorphism $h:Q(K)	\longrightarrow Q$. We can assume that $h$ is surjective. Then there exists a surjective group homomorphism from $\dis(Q(K))\longrightarrow \dis(Q)$, and so also $\dis(Q)$ is perfect. According to \cite[Lemma 6.1]{CP}, $Q$ is not solvable.
%
%

(iii) $\Rightarrow$ (i) Affine quandles are abelian.

(ii) $\Leftrightarrow$ (iv) It follows directly from the structure of $\mathcal{M}(K)$, displayed in \eqref{structure of M}. Indeed, $\mathcal M (K)$ is trivial if and only if $\frac{\Delta_i(K)}{\Delta_{i+1}(K)}$ is invertible in $\Lambda$ for every $i=1,\ldots,n-1$. Since $$\Delta_1(K)=\Delta_{n-1}(K)\frac{\Delta_{n-2}(K)}{\Delta_{n-1}(K)}\ldots \frac{\Delta_1(K)}{\Delta_2(K)}\;,$$ then this can happen if and only if $\Delta_1(K)$ is invertible, i.e. $\Delta_1(K)=1$.
\end{proof}

 \begin{proposition}\label{coloring2}
 Let $K$ be a knot. If $\Delta_1(K)\neq 1$, then $K$ is colorable by the affine quandle $\aff\left(\frac{\Lambda}{(\Delta_1(K))},t\right)$.
\end{proposition}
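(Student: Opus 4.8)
The plan is to reduce the statement to the construction of a single non-zero $\Lambda$-module homomorphism $g\colon \mathcal{M}(K)\to \Lambda/(\Delta_1(K))$, and then to produce such a $g$ explicitly from the invariant factor decomposition \eqref{structure of M}. For the reduction I would argue as follows. By Lemma \ref{Alex is a factor of QK} the map $\gamma\colon Q(K)\to \mathrm{Alex}(K)=\aff(\mathcal{M}(K),t)$ is a surjective quandle homomorphism. Given a $\Lambda$-module homomorphism $g$ as above, Proposition \ref{hom of affine} (taken with $b=0$) shows that $g$ is simultaneously a quandle homomorphism $\widehat g\colon \mathrm{Alex}(K)\to \aff(\Lambda/(\Delta_1(K)),t)$; here both sides are connected affine quandles, the source because $(1-t)\mathcal{M}(K)=\mathcal{M}(K)$ (established inside the proof of Lemma \ref{Alex is a factor of QK}) and the target because the value of $\Delta_1(K)$ at $t=1$ is a unit, although this connectedness is inessential since a $\Lambda$-linear map induces a quandle homomorphism between affine quandles directly. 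The composite $c=\widehat g\circ \gamma\colon Q(K)\to \aff(\Lambda/(\Delta_1(K)),t)$ is then a coloring of $K$, and since $\gamma$ is onto we get $\mathrm{Im}(c)=\mathrm{Im}(g)$, so $c$ is non-trivial exactly when $g\neq 0$.

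It therefore remains to exhibit a non-zero $g$. By \eqref{structure of M} the module $\mathcal{M}(K)$ is a direct sum of cyclic modules $\Lambda/(d_j)$ whose invariant factors satisfy $\prod_j d_j \doteq \Delta_1(K)$ (this is the telescoping product already used in the proof of Proposition \ref{coloring by affine}). Since $\Delta_1(K)\neq 1$ is a non-unit, at least one factor, say $d$, must be a non-unit. I would then take $g$ to be the composite of the projection $\mathcal{M}(K)\twoheadrightarrow \Lambda/(d)$ onto the corresponding summand with the map $\Lambda/(d)\to \Lambda/(\Delta_1(K))$ that sends the class of $1$ to the class of $\Delta_1(K)/d$. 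This is well defined because $d\mid \Delta_1(K)$, it is $\Lambda$-linear by construction, and it is non-zero because $\Delta_1(K)\nmid \Delta_1(K)/d$ precisely when $d$ is a non-unit. Feeding this $g$ into the reduction above yields the required non-trivial coloring.

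The substantive computations here are all routine: the $\Lambda$-linearity and well-definedness of $g$, and the elementary direction of Proposition \ref{hom of affine} asserting that a module map is a quandle map. I expect the only delicate points to be organizational, namely confirming that the cyclic summands furnished by \eqref{structure of M} really multiply to $\Delta_1(K)$ so that $d\mid \Delta_1(K)$, and threading the identification $\mathrm{Im}(c)=\mathrm{Im}(g)$ through the two factorizations $\gamma$ and $\widehat g$ so that non-triviality of $c$ is seen to be equivalent to $g\neq 0$. I do not anticipate any genuine obstacle beyond this bookkeeping.
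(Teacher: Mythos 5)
Your proposal is correct and follows essentially the same route as the paper: both arguments build a non-zero $\Lambda$-module homomorphism $\mathcal{M}(K)\to\Lambda/(\Delta_1(K))$ from the invariant-factor decomposition \eqref{structure of M} by projecting onto a cyclic summand $\Lambda/(d)$ and multiplying by the cofactor $\Delta_1(K)/d$, then convert it into a non-trivial coloring via Proposition \ref{hom of affine} and the quotient $Q(K)\to\mathrm{Alex}(K)$. The only difference is that the paper fixes a specific summand (the one indexed by the largest $j$ with $\Delta_j(K)/\Delta_{j+1}(K)\neq 1$) and writes the multiplier as $\Delta_{j+1}(K)$, whereas you take an arbitrary non-unit invariant factor; up to a unit this is the same map.
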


\begin{proof}
Denote by $j$ the largest index $i$ such that $\frac{\Delta_i(K)}{\Delta_{i+1}(K)}\neq 1$ (such index exists since $\Delta_1(K)	\neq 1$). The elements $\frac{\Delta_j(K)}{\Delta_{j+1}(K)}$ and $\frac{\Delta_1(K)}{\Delta_{j+1}(K)}$ generate the same ideal in $\Lambda$, since
$$\frac{\Delta_{1}(K)}{\Delta_{j+1}(K)}=\underbrace{\frac{\Delta_{1}(K)}{\Delta_{2}(K)}\frac{\Delta_{2}(K)}{\Delta_{3}(K)}	\ldots \frac{\Delta_{j-1}(K)}{\Delta_{j}(K)}}_{=u}\frac{\Delta_{j}(K)}{\Delta_{j+1}(K)}$$
and $u$ is invertible in $\Lambda$. 
 Accordingly, the mapping
\begin{eqnarray}\label{map}
\mathcal M (K)=\frac{\Lambda}{(\Delta_{n-1}(K))}\oplus \ldots \oplus \frac{\Lambda}{(\frac{\Delta_j(K)}{\Delta_{j+1}(K)})} &\longrightarrow & \frac{\Lambda}{(\Delta_1(K))}\\
 (x_1,\ldots,x_j) &\mapsto & \Delta_{j+1} \cdot x_j+\Delta_1(K)\notag
\end{eqnarray}
is a well-defined module homomorphism with non-trivial image. So, the map \eqref{map} is also a quandle homomorphism from $\mathrm{Alex}(K)$ to $Q=\aff\left(\frac{\Lambda}{(\Delta_1(K))},t\right)$ by Proposition \ref{hom of affine}, and so $K$ admits a non-trivial coloring by a subquandle of $Q$. 
\end{proof}

Note that the image of the map defined in \eqref{map} is the ideal $I$, generated by $\Delta_{j+1}(K)+(\Delta_1(K))$. Then the knot $K$ admits a coloring by the subquandle  $\aff(I,t)$. See \cite{BAE} for some computational results about colorings of knots with crossing number at most $8$.

\section*{Acknowledgements}
 Alessia Cattabriga has been supported by the "National Group for Algebraic and Geometric Structures, and their Applications" (GNSAGA-INdAM) and University of Bologna, funds for selected research topics. Eva Horvat was supported by the Slovenian Research Agency grants P1-0292 and N1-0083.


%
%
%
%
%
%
%
%
%
%
%

\end{document}